\newif\ifpdf
\numberwithin{equation}{section} \swapnumbers
\newtheorem{satz}{Satz}[section]
\newtheorem{theorem}[satz]{Theorem}
\newtheorem{proposition}[satz]{Proposition}
\newtheorem{corollary}[satz]{Corollary}
\newtheorem{lemma}[satz]{Lemma}
\newtheorem{definition}[satz]{Definition}
\newtheorem{remark}[satz]{Remark}
\newtheorem{example}[satz]{Example}
\newcommand{\bbr}{\mathbb{R}}
\newcommand{\bbc}{\mathbb{C}}
\newcommand{\bbn}{\mathbb{N}}
\newcommand{\bbp}{\mathbb{P}}
\newcommand{\bbm}{\mathbb{M}}
\newcommand{\calb}{\mathcal{B}}
\newcommand{\cald}{\mathcal{D}}
\newcommand{\calf}{\mathcal{F}}
\newcommand{\calm}{\mathcal{M}}
\begin{document}

\title[Affine realizations for stochastic partial differential equations]{Existence of affine realizations for stochastic partial differential equations driven by L\'{e}vy processes}
\author{Stefan Tappe}
\address{Leibniz Universit\"{a}t Hannover, Institut f\"{u}r Mathematische Stochastik, Welfengarten 1, 30167 Hannover, Germany}
\email{tappe@stochastik.uni-hannover.de}
\thanks{I am grateful to Ozan Akdogan, Stefan Weber and two anonymous referees for valuable comments and suggestions.}
\begin{abstract}
The goal of this paper is to clarify when a semilinear stochastic partial differential equation driven by L\'{e}vy processes admits an affine realization. Our results are accompanied by several examples arising in natural sciences and economics.
\end{abstract}
\keywords{Stochastic partial differential equation, affine realization, invariant foliation, quasi-exponential volatility}
\subjclass[2010]{60H15, 91G80}
\maketitle

\section{Introduction}

The goal of this paper is to clarify when a semilinear stochastic partial differential equation (SPDE) of the form
\begin{align}\label{SPDE}
\left\{
\begin{array}{rcl}
dr_t & = & (A r_t + \alpha(r_t))dt +
\sigma(r_{t-})dX_t
\medskip
\\ r_0 & = & h_0
\end{array}
\right.
\end{align}
in the spirit of \cite{P-Z-book} driven by a $\bbr^m$-valued L\'{e}vy process $X$ (for some positive integer $m \in \bbn$) admits an affine realization. Affine realizations are particular types of finite dimensional realizations (FDRs). Denoting by $H$ the state space of (\ref{SPDE}), which we assume to be a separable Hilbert space, the idea of a FDR is that for each starting point $h_0 \in H$ we can express the weak solution $r$ to (\ref{SPDE}) as
\begin{align}\label{r-FDR}
r = \varphi(Y)
\end{align}
for some $\bbr^d$-valued process $Y$ (where $d \in \bbn$ is a positive integer) and a deterministic mapping $\varphi : \bbr^d \to H$, which makes the infinite dimensional SPDE (\ref{SPDE}) more tractable. If we have a representation of the form (\ref{r-FDR}), then the mapping $\varphi$ is the parametrization of an invariant submanifold $\calm$.

We speak about an affine realization if for each starting point $h_0 \in H$ we can express the weak solution $r$ to (\ref{SPDE}) as
\begin{align}\label{decomp-intro}
r = \psi + Y
\end{align}
with a deterministic curve $\psi : \bbr_+ \to H$ and a stochastic process $Y$ having values in a finite dimensional subspace $V \subset H$. In this case, we also say that the SPDE (\ref{SPDE}) has an affine realization generated by $V$, and the invariant manifold $(\calm_t)_{t \in \bbr_+}$ is a collection of affine spaces $\calm_t = \psi(t) + V$, also called a foliation.

Note that the existence of an affine realization makes the infinite dimensional SPDE (\ref{SPDE}) very tractable, because then we have a simple structure of the invariant manifolds, which might be more complicated for a general FDR. Surprisingly, in many cases we can deduce the existence of an affine realization from the existence of a FDR:
\begin{itemize}
\item As shown in \cite{Filipovic}, the existence of a FDR for the Wiener process driven HJMM equation implies the existence of an affine realization. Here we use the name HJMM equation, as it is the Heath-Jarrow-Morton (HJM) model from \cite{HJM} with Musiela parametrization presented in \cite{Musiela}.

\item As shown in \cite{Tappe-rank}, for the general L\'{e}vy process driven SPDE (\ref{SPDE}) the flatness of an invariant manifold is at least equal to the number of driving sources with small jumps. Thus, if the SPDE (\ref{SPDE}) has driving L\'{e}vy processes with small jumps, then every FDR up to a certain dimension must be an affine realization.
\end{itemize}
There is a substantial literature about invariant manifolds and FDRs for SPDEs. Stochastic invariance of a given finite dimensional submanifold has been studied in \cite{Filipovic-inv}, and -- based on the support theorem presented in \cite{Nakayama-Support} -- in \cite{Nakayama} for SPDEs driven by Wiener processes, in \cite{FTT-manifolds} for SPDEs driven by Wiener processes and Poisson random measures, and in \cite{Tappe-rank} for SPDEs driven by L\'{e}vy processes. The existence of FDRs for the HJMM equation driven by Wiener processes has intensively been studied in the literature, and we refer to \cite{Bj_Sv, Bj_La, Filipovic, Filipovic-Teichmann-royal} and references therein, and to \cite{Bjoerk} for a survey. Furthermore, the existence of affine realizations for the HJMM equation has been studied in \cite{Tappe-Wiener, Tappe-affin} with a driving Wiener process, and in \cite{Tappe-Levy, Platen-Tappe} with a driving L\'{e}vy process.

The goal of this paper is to clarify when the general SPDE (\ref{SPDE}) driven by L\'{e}vy processes has an affine realization, which has not been treated in the literature so far. Compared to the aforementioned papers \cite{Tappe-Wiener, Tappe-affin, Tappe-Levy, Platen-Tappe}, we use a slightly different concept of an affine realization:
\begin{itemize}
\item We demand that for every starting point $h_0 \in H$ the weak solution $r$ to (\ref{SPDE}) is of the form (\ref{decomp-intro}), whereas in the aforementioned papers this is only demanded for every $h_0 \in \cald(A)$, which denotes the domain of the linear operator $A : \cald(A) \subset H \to H$ appearing in (\ref{SPDE}). 

\item On the other hand, our definition is more relaxed, because we only demand that the invariant foliations are $C^0$-foliations, whereas in the aforementioned papers they have to be $C^1$-foliations.
\end{itemize}
Now, let us outline the main results of this paper. Concerning the precise assumptions on the L\'{e}vy process $X$ and the parameters $(A,\alpha,\sigma)$ of the SPDE (\ref{SPDE}) we refer to the beginning of Section~\ref{sec-SPDEs}. We fix a finite dimensional subspace $V \subset H$ and agree on the following terminology. We say that the subspace $V$ is
\begin{itemize}
\item \emph{$A$-semi-invariant} if $A(V \cap \cald(A)) \subset V$;

\item \emph{$A$-invariant} if $V \subset \cald(A)$ and $A(V) \subset V$.
\end{itemize}
Our first main result presents necessary and sufficient conditions for the existence of an affine realization generated by $V$ in terms of the parameters $(A,\alpha,\sigma)$ of the SPDE (\ref{SPDE}). We will provide the proof in Section \ref{sec-realizations}.

\begin{theorem}\label{thm-intro}
Suppose that the subspace $V$ is $A$-semi-invariant. Then the SPDE (\ref{SPDE}) has an affine realization generated by $V$ if and only if the following three conditions are fulfilled:
\begin{enumerate}
\item $V$ is $A$-invariant (or equivalently: $V \subset \cald(A)$).

\item For each $h \in H$ the projection $\Pi_{(\bullet,V)} \alpha$ is constant on $h+V$.

\item $\sigma^k(H) \subset V$ for all $k=1,\ldots,m$.
\end{enumerate}
\end{theorem}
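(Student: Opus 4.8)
The plan is to prove the two implications separately, relying throughout on the characterisation of invariant foliations developed in Section~\ref{sec-realizations} (and, for the jump part, on \cite{Tappe-rank}); recall that ``$(\ref{SPDE})$ has an affine realization generated by $V$'' means that through every starting point $h_0\in H$ there is a deterministic curve $\psi$ with $\psi(0)=h_0$ for which the affine spaces $\calm_t=\psi(t)+V$ form an invariant foliation of (\ref{SPDE}).

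\emph{Sufficiency.} Assume \textup{(1)}--\textup{(3)} and fix $h_0\in H$. By \textup{(1)} the restriction $A|_V$ is bounded on $V$ and $(S_t)$ leaves $V$ invariant; fix a closed complement $W$ with continuous projections $\Pi_V,\Pi_W$. Condition \textup{(2)} makes $\Pi_W\alpha$ factor through $H/V$, so together with the standing regularity of $\alpha$ the deterministic mild equation
\begin{equation*}
\psi(t)=S_th_0+\int_0^t S_{t-s}(\Pi_W\alpha)(\psi(s))\,ds
\end{equation*}
has a unique continuous solution, and by \textup{(3)} the $V$-valued equation
\begin{equation*}
dY_t=\big((A|_V)Y_t+(\Pi_V\alpha)(\psi(t)+Y_t)\big)dt+\sigma(\psi(t-)+Y_{t-})\,dX_t,\qquad Y_0=0,
\end{equation*}
is a genuinely finite dimensional SDE with a unique solution $Y$ taking values in $V$. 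Putting $r:=\psi+Y$, writing $Y$ in mild form (legitimate since $V$ is $S$-invariant with $A|_V$ bounded) and using \textup{(2)} to recombine the drift terms into $\int_0^tS_{t-s}\alpha(r_s)\,ds$, one sees that $r$ satisfies the mild form of (\ref{SPDE}) with $r_0=h_0$ and hence, by uniqueness, is its weak solution; the same $\psi$ serves every $h_0+v_0$ with $v_0\in V$ (since $S_tv_0\in V$), so $\calm_t=\psi(t)+V$ is invariant. This produces the required affine realization.

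\emph{Necessity.} Assume the realization exists; fix $h_0$ with $r=\psi+Y$ ($Y$ valued in $V$) and a complement $W$ as above. Since $\Pi_Wr_t=\Pi_W\psi(t)$ is deterministic and continuous, it has no martingale part and no jumps. Testing the weak formulation of (\ref{SPDE}) against elements of $V^\perp\cap\cald(A^*)$ (dense in $V^\perp$), the martingale part reduces to that of the $X$-integral; its vanishing together with the nondegeneracy of the components of $X$ gives $\Pi_W\big(S_\tau\sigma^k(r_{s-})\big)=0$ for all $\tau\ge0$ and a.e.\ $s$, hence — evaluating at $s=0$ and varying $h_0$ — both condition \textup{(3)} and the relation $\Pi_WS_\tau\sigma^k\equiv0$ on $H$. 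Now apply invariance of the foliation: for $v\in V$ the solution started at $h_0+v$ stays in the \emph{same} leaf $\psi(t)+V$, and subtracting its mild representation from that of $r$ and projecting by $\Pi_W$ — using $\Pi_WS_\tau\sigma^k\equiv0$ to discard the noise term — yields $\Pi_WS_tv=-\int_0^t\Pi_WS_{t-s}\big(\alpha(r^{h_0+v}_s)-\alpha(r_s)\big)\,ds$. Together with the invariance of the whole foliation (Section~\ref{sec-realizations}), applied over all starting points and times, this yields $S_t(V)\subset V$ for every $t\ge0$, which for $\dim V<\infty$ is the same as $V\subset\cald(A)$ together with $A(V)\subset V$, i.e.\ \textup{(1)}. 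Finally, once \textup{(1)} and \textup{(3)} hold the last display shows $\int_0^tS_{t-s}\big(\alpha(r^{h_0+v}_s)-\alpha(r_s)\big)\,ds\in V$ for all $t$; dividing by $t$ and letting $t\downarrow0$ gives $\Pi_W\alpha(h_0+v)=\Pi_W\alpha(h_0)$, so $\Pi_{(\bullet,V)}\alpha$ is constant on each affine space $h+V$, which is \textup{(2)}.

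The step I expect to be the main obstacle is the derivation of $A$-invariance in the necessity part. Projecting onto $W$ only controls $\Pi_WS_tv$, while the $V$-component of $t\mapsto S_tv$ carries exactly the same potential irregularity as the difference process it is being compared with, so subtracting two solutions never resolves it; one genuinely needs that the entire family of leaves — through the various starting points, with one fixed direction space $V$ — constitutes an invariant foliation, and this is precisely where the characterisation of invariant foliations from Section~\ref{sec-realizations} becomes indispensable.
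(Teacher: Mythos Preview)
Your sufficiency argument is essentially the paper's: you build $\psi$ as a mild solution of a $W$-projected PDE and $Y$ as a $V$-valued SDE, then add them. This is exactly Lemma~\ref{lemma-foliations} combined with the easy direction of Theorem~\ref{thm-foliation}.

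The necessity argument, however, has a genuine gap at exactly the point you flag. First, the claim that testing the weak formulation against $\xi\in V^\perp\cap\cald(A^*)$ yields $\Pi_W(S_\tau\sigma^k(r_{s-}))=0$ for all $\tau\ge 0$ is not right: the weak formulation contains $\langle\xi,\sigma^k(r_{s-})\rangle$, not $\langle\xi,S_\tau\sigma^k(r_{s-})\rangle$, so what you actually obtain is $\sigma^k(r_{s-})\in V$ (granting the density claim). To get $S_\tau\sigma^k\in V$ you would already need $S$-invariance of $V$, which is condition~(1). The same circularity bites when you subtract the two mild representations and ``use $\Pi_W S_\tau\sigma^k\equiv 0$ to discard the noise term'': that step presupposes the conclusion. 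Testing only against $\xi\in V^\perp\cap\cald(A^*)$ is also not enough to show $v\in\cald(A)$: you get boundedness of $\xi\mapsto\langle A^*\xi,v\rangle$ on $V^\perp\cap\cald(A^*)$, but you need boundedness on all of $\cald(A^*)$ to invoke $A=A^{**}$.

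The paper resolves this differently (in Section~\ref{sec-foliations}, not Section~\ref{sec-realizations}). Rather than projecting onto a complement, it tests the weak formulation against \emph{every} $\xi\in\cald(A^*)$, but first represents the difference $r^{h_0+v}-r^{h_0}$ through an isomorphism $T:\bbr^d\to V$ built from elements $\zeta_1,\ldots,\zeta_d\in\cald(A^*)$ (this is \cite[Lemma~2.10]{Tappe-Wiener}). Comparing the two resulting expressions for $\langle\xi,r_t^{h_0+v}-r_t^{h_0}\rangle$ produces an explicit formula for $\langle A^*\xi,v\rangle$ that is manifestly continuous in $\xi$ in the $H$-norm; hence $v\in\cald(A^{**})=\cald(A)$, which is (1). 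Once that is in hand, (2) and (3) drop out of the same comparison. So the key device you are missing is the dual pairing through $\zeta_1,\ldots,\zeta_d\in\cald(A^*)$, which lets you work with arbitrary test functions rather than only those annihilating $V$.
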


Concerning Theorem~\ref{thm-intro}, let us remark the following two points:
\begin{itemize}
\item The assumption that the subspace $V$ is $A$-semi-invariant does not mean a restriction. Indeed, we will show that we can always rewrite the SPDE (\ref{SPDE}) equivalently as
\begin{align}\label{SPDE-B}
\left\{
\begin{array}{rcl}
dr_t & = & (B r_t + \beta(r_t))dt +
\sigma(r_{t-})dX_t
\medskip
\\ r_0 & = & h_0,
\end{array}
\right.
\end{align}
such that the subspace $V$ is $B$-semi-invariant; see Lemmas~\ref{lemma-transformation} and \ref{lemma-equiv} below.

\item In condition (2), we denote by $\Pi_{(\bullet,V)} \alpha$ the projection of the drift $\alpha$ on the first coordinate $U$ according to some direct sum decomposition $H = U \oplus V$ of the Hilbert space. Condition (2) does not depend on the choice of the subspace $U$ appearing in $H = U \oplus V$, which follows from Lemma~\ref{lemma-const} below.
\end{itemize}
Theorem~\ref{thm-intro} has the following immediate consequence:

\begin{corollary}\label{cor-intro}
Suppose that the following three conditions are fulfilled:
\begin{enumerate}
\item $V$ is $A$-invariant.

\item $\alpha(H) \subset V$.

\item $\sigma^k(H) \subset V$ for all $k=1,\ldots,m$.
\end{enumerate}
Then the SPDE (\ref{SPDE}) has an affine realization generated by $V$.
\end{corollary}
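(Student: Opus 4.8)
The plan is to obtain this as an immediate corollary of Theorem~\ref{thm-intro}. First I would check that the hypotheses make Theorem~\ref{thm-intro} applicable, i.e.\ that $V$ is $A$-semi-invariant. This is immediate from $A$-invariance: if $V \subset \cald(A)$ and $A(V) \subset V$, then $V \cap \cald(A) = V$, whence $A(V \cap \cald(A)) = A(V) \subset V$. So the standing assumption of Theorem~\ref{thm-intro} is satisfied.

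Next I would verify the three conditions listed in Theorem~\ref{thm-intro}. Conditions (1) (that $V$ be $A$-invariant) and (3) (that $\sigma^k(H) \subset V$ for all $k = 1,\ldots,m$) are assumed verbatim in the corollary, so nothing is to be done for them. For condition (2), I would fix an arbitrary direct sum decomposition $H = U \oplus V$ with associated projection $\Pi_{(\bullet,V)}$ onto $U$. Since $\alpha(H) \subset V$ by hypothesis, we have $\Pi_{(\bullet,V)} \alpha(h) = 0$ for every $h \in H$; thus $\Pi_{(\bullet,V)} \alpha$ is identically zero, and in particular constant on each affine space $h + V$. (By the remark following Theorem~\ref{thm-intro}, together with Lemma~\ref{lemma-const}, this conclusion does not depend on the chosen complement $U$.)

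Having verified all three conditions, Theorem~\ref{thm-intro} yields that the SPDE (\ref{SPDE}) has an affine realization generated by $V$, which completes the proof. I do not anticipate any genuine obstacle here: the mathematical content is entirely contained in Theorem~\ref{thm-intro}, and the corollary merely records the convenient special case in which the drift takes values in $V$, so that the constancy-on-leaves requirement degenerates to the trivial observation that the zero function is constant.
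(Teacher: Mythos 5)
Your proposal is correct and is exactly the argument the paper intends: the paper states the corollary as an ``immediate consequence'' of Theorem~\ref{thm-intro}, and your verification ($A$-invariance gives $A$-semi-invariance, conditions (1) and (3) are verbatim, and $\alpha(H)\subset V$ forces $\Pi_{(\bullet,V)}\alpha\equiv 0$, hence constant on each $h+V$) is precisely that immediate deduction. Nothing further is needed.
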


In applications, one is often interested in linear SPDEs of the type (\ref{SPDE}), which means that the drift $\alpha$ appearing in (\ref{SPDE}) is constant. We will see that for linear SPDEs we can even skip the assumption that the subspace $V$ is $A$-semi-invariant, and obtain our second main result, which we will also prove in Section \ref{sec-realizations}.

\begin{theorem}\label{thm-linear}
Suppose that the SPDE (\ref{SPDE}) is linear. Then it has an affine realization generated by $V$ if and only if the following two conditions are fulfilled:
\begin{enumerate}
\item $V$ is $A$-invariant.

\item $\sigma^k(H) \subset V$ for all $k=1,\ldots,m$.
\end{enumerate}
\end{theorem}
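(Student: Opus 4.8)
The plan is to deduce Theorem~\ref{thm-linear} from Theorem~\ref{thm-intro} by removing the standing $A$-semi-invariance hypothesis via the transformation into the form (\ref{SPDE-B}) provided by Lemmas~\ref{lemma-transformation} and \ref{lemma-equiv}. Concretely: given the finite dimensional subspace $V \subset H$, one first applies those lemmas to rewrite (\ref{SPDE}) equivalently as (\ref{SPDE-B}), with a (possibly new) generator $B$ and a new drift term $\beta$, such that $V$ is now $B$-semi-invariant. Since the original SPDE is linear, $\alpha$ is constant; the transformation lemma should keep the rewritten drift $\beta$ an affine function of $r$ of the form $\beta(r) = \beta_0 + (B - A)r$ (or similar), which need not be constant, but whose projection onto the complement of $V$ we can still control. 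Because the notion of having an affine realization generated by $V$ is preserved under the equivalence of Lemma~\ref{lemma-equiv}, it suffices to characterize when (\ref{SPDE-B}) has such a realization, and for that we may now invoke Theorem~\ref{thm-intro}.

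Next I would unwind the three conditions of Theorem~\ref{thm-intro} applied to (\ref{SPDE-B}). Condition (3) there is $\sigma^k(H) \subset V$ for all $k$, which is literally condition (2) of Theorem~\ref{thm-linear} (the volatility is untouched by the transformation). Condition (1) of Theorem~\ref{thm-intro} says $V$ is $B$-invariant, i.e.\ $V \subset \cald(B)$; but the transformation lemma should arrange $\cald(B) = \cald(A)$ and $B - A$ bounded (indeed finite rank, reflecting the splitting along $V$), so $V \subset \cald(B)$ is equivalent to $V \subset \cald(A)$, i.e.\ to $A$-invariance of $V$ once we also check $A(V) \subset V$. The latter follows because $B$-invariance of $V$ together with the explicit form of $B - A$ forces $A(V) \subset V$ as well; this bookkeeping step is where one must be a little careful. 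Finally, condition (2) of Theorem~\ref{thm-intro} — that $\Pi_{(\bullet,V)}\beta$ is constant on each coset $h + V$ — should become automatic in the linear case: since $\alpha$ is constant and $B - A$ is chosen precisely so that $(B-A)|_V$ lands in $V$, the part of $\beta$ transverse to $V$ is constant, so condition (2) holds with no extra assumption. Assembling these equivalences yields exactly conditions (1) and (2) of Theorem~\ref{thm-linear}.

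For the converse direction the same dictionary runs backwards: assuming $V$ is $A$-invariant and $\sigma^k(H) \subset V$, one checks that after the transformation $V$ is $B$-invariant, condition (2) of Theorem~\ref{thm-intro} holds by the constancy argument above, and condition (3) is immediate, so Theorem~\ref{thm-intro} produces an affine realization generated by $V$ for (\ref{SPDE-B}), hence for (\ref{SPDE}) by Lemma~\ref{lemma-equiv}. The main obstacle I anticipate is purely administrative rather than conceptual: making sure the transformation of Lemmas~\ref{lemma-transformation}–\ref{lemma-equiv} interacts correctly with the domain condition (showing $V \subset \cald(A) \iff V \subset \cald(B)$ and $A(V) \subset V$) and verifying that the rewritten drift $\beta$, while no longer constant, still has constant transverse component so that condition (2) of Theorem~\ref{thm-intro} is free. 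Once that is nailed down, the theorem is essentially a corollary of Theorem~\ref{thm-intro}.
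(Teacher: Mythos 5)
Your overall strategy for the ``only if'' direction --- pass to the equivalent equation (\ref{SPDE-B}) via Lemmas~\ref{lemma-transformation} and \ref{lemma-equiv} and then invoke Theorem~\ref{thm-intro} --- is exactly the paper's (for the ``if'' direction no transformation is needed at all: $A$-invariance already implies $A$-semi-invariance, and condition (2) of Theorem~\ref{thm-intro} is trivial because $\alpha$ is constant). However, your unwinding of the conditions for the transformed equation contains a genuine gap. With the operator $T$ of Lemma~\ref{lemma-transformation} one has, for every $v \in V \subset \cald(A)$, $Bv = Av - \Pi_U A v = \Pi_V A v \in V$; thus $B(V)\subset V$ holds automatically as soon as $V\subset\cald(A)$, and condition (1) of Theorem~\ref{thm-intro} applied to $B$ yields only $V\subset\cald(B)=\cald(A)$ --- it carries no information whatsoever about $A(V)\subset V$. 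Your claim that ``$B$-invariance of $V$ together with the explicit form of $B-A$ forces $A(V)\subset V$'' is therefore false.

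Symmetrically, your claim that condition (2) of Theorem~\ref{thm-intro} becomes automatic for the transformed drift $\beta=\alpha-T$ is wrong, and in fact that condition is exactly the missing ingredient. The operator $T=-\Pi_U\tilde{A}\Pi_V$ maps $V$ into $U$, not into $V$, so it is not true that ``$(B-A)|_V$ lands in $V$'' unless $A(V)\subset V$ already holds, which is what you are trying to prove. Since $\alpha$ is constant, $\Pi_U\beta(h+v)=\Pi_U\alpha-\Pi_U T h-\Pi_U T v$, so constancy of $\Pi_U\beta$ on the cosets of $V$ is equivalent, by linearity of $T$, to $\Pi_U T|_V=0$, i.e.\ to $T(V)\subset V$; given $V\subset\cald(A)$ this is equivalent to $A(V)\subset V$. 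This is precisely how the paper closes the argument: from condition (2) and the constancy of $\alpha$ it deduces $V\subset\ker(\Pi_U T)$, hence $T(V)\subset V$, hence $A(V)=(B-T)(V)\subset V$. As written, your proof declares ``free'' the one condition of Theorem~\ref{thm-intro} that actually delivers the $A$-invariance of $V$, and attributes that invariance to a condition that cannot deliver it, so the ``only if'' direction does not go through without repairing this step.
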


So far, we have specified a finite dimensional subspace $V$ in advance, and asked for an affine realization generated by $V$. If the SPDE (\ref{SPDE}) is linear, then there are two approaches in order to analyze the existence of an affine realization without specifying a subspace in advance:
\begin{itemize}
\item We will present a result (see Theorem~\ref{thm-qe} below) which states that the linear SPDE (\ref{SPDE}) has an affine realization if and only if the volatility is quasi-exponential.

\item Another approach is to determine all finite dimensional $A$-invariant subspaces, and to apply Theorem~\ref{thm-linear}. This leads to a generalized eigenvalue problem, which we will illustrate in Section \ref{sec-examples} by means of several examples.
\end{itemize}
The remainder of this paper is organized as follows. In Section \ref{sec-SPDEs} we provide the required preliminaries about SPDEs driven by L\'{e}vy processes, and in Section \ref{sec-Hilbert} we provide the required results about direct sum decompositions of Hilbert spaces. In Section \ref{sec-foliations} we present our results about $C^0$-foliations, and in Section \ref{sec-realizations} we provide the proofs of our main results concerning the existence of affine realizations. In Section \ref{sec-HJMM}, we study the HJMM equation as an example of a nonlinear SPDE, and in Section \ref{sec-examples} we present several examples of linear SPDEs arising in natural sciences and economics.

\section{SPDEs driven by L\'{e}vy processes}\label{sec-SPDEs}

In this section, we provide the required preliminaries about SPDEs driven by L\'{e}vy processes. Let $(\Omega,\calf,(\calf_t)_{t \in \bbr_+},\bbp)$ be a filtered probability space satisfying the usual conditions. Let $X$ be a $\bbr^m$-valued L\'{e}vy process for some positive integer $m \in \bbn$ such that its components $X^1,\ldots,X^m$ are nontrivial square-integrable martingales. Let $H$ be a separable Hilbert space and let $A : \cald(A) \subset H \to H$ be the infinitesimal generator of a $C_0$-semigroup on $H$. We assume that the generated semigroup $(S_t)_{t \geq 0}$ is pseudo-contractive; that is, there exists a constant $\beta \in \bbr$ such that
\begin{align*}
\| S_t \| \leq e^{\beta t} \quad \text{for all $t \geq 0$.}
\end{align*}
Furthermore, let $\alpha : H \to H$ and $\sigma : H \to H^m$ be Lipschitz continuous mappings. 

\begin{remark}
Under the above conditions, for each $h_0 \in H$ the SPDE (\ref{SPDE}) has a unique weak solution; that is, a $H$-valued c\`{a}dl\`{a}g adapted process $r$, unique up to indistinguishability, such that for each $\xi \in \cald(A^*)$ we have
\begin{align*}
\langle \xi,r_t \rangle = \langle \xi,h_0 \rangle + \int_0^t \big( \langle A^* \xi, r_s \rangle + \langle \xi,\alpha(r_s) \rangle \big) ds + \int_0^t \langle \xi,\sigma(r_{s-}) \rangle dX_s, \quad t \in \bbr_+,
\end{align*}
where we use the notation
\begin{align*}
\int_0^t \langle \xi,\sigma(r_{s-}) \rangle dX_s := \sum_{k=1}^m \int_0^t \langle \xi, \sigma^k(r_{s-}) \rangle dX_s^k, \quad t \in \bbr_+
\end{align*}
for the vector It\^{o} integral. We refer the reader, e.g., to \cite{P-Z-book} for further details.
\end{remark}

\begin{definition}
Let $B : \cald(B) \subset H \to H$ be the infinitesimal generator of a $C_0$-semigroup on $H$, and let $\beta : H \to H$ be a Lipschitz continuous mapping. Then the SPDEs (\ref{SPDE}) and (\ref{SPDE-B}) are called \emph{equivalent} if for each $h_0 \in H$ the weak solution to (\ref{SPDE}) with $r_0 = h_0$ coincides with the weak solution to (\ref{SPDE-B}) with $r_0 = h_0$.
\end{definition}

Let $V \subset H$ be a finite dimensional subspace. The following two auxiliary results show that the assumption from Theorem~\ref{thm-intro} that $V$ is $A$-semi-invariant does not mean a restriction.

\begin{lemma}\label{lemma-transformation}
There exists a linear operator $T \in L(H)$ such that $V$ is $B$-semi-invariant, where the linear operator $B : \cald(B) \subset H \to H$ is given by $\cald(B) := \cald(A)$ and $B := A + T$.
\end{lemma}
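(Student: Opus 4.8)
The plan is to build $T$ explicitly from a projection onto $V$ and then check the semi-invariance condition $B(V \cap \cald(B)) \subset V$ directly. Since $V$ is finite dimensional, it is closed, and we may pick a topological complement: write $H = U \oplus V$ for some closed subspace $U$, and let $\Pi_V \in L(H)$ denote the associated (bounded) projection onto $V$ along $U$. The operator $T$ will be a correction that "kills" the part of $Av$ lying in $U$, for $v$ ranging over a basis of $V$. The only subtlety is that $V$ need not be contained in $\cald(A)$, so $Av$ is not defined for every $v \in V$; I will therefore work with $W := V \cap \cald(A)$, which is the relevant space since $\cald(B) = \cald(A)$ forces $V \cap \cald(B) = W$.

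First I would fix a basis $e_1, \dots, e_r$ of $V$ chosen so that $e_1, \dots, e_s$ is a basis of $W = V \cap \cald(A)$ (possible since $W$ is a subspace of the finite dimensional space $V$). Extend $\Pi_V$'s complementary projection $\Pi_U := \mathrm{id}_H - \Pi_V$, and define $T \in L(H)$ by
\begin{align*}
Tx := -\sum_{i=1}^{s} \lambda_i(x)\, \Pi_U A e_i, \qquad x \in H,
\end{align*}
where $\lambda_1, \dots, \lambda_s \in H^*$ are bounded linear functionals chosen so that $\lambda_i(e_j) = \delta_{ij}$ for $1 \le i, j \le s$ and $\lambda_i(e_j) = 0$ for $j > s$ — for instance, take $\lambda_i$ to be the coordinate functionals of the basis $e_1, \dots, e_r$ of $V$ precomposed with $\Pi_V$. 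Then $T$ has finite rank, hence $T \in L(H)$, so $B := A + T$ with $\cald(B) := \cald(A)$ generates a $C_0$-semigroup by the standard bounded perturbation theorem (this is where I would cite the relevant perturbation result, e.g.\ from Engel–Nagel or Pazy).

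It remains to verify $B(V \cap \cald(B)) \subset V$. Since $\cald(B) = \cald(A)$, we have $V \cap \cald(B) = W$, and $W = \mathrm{span}\{e_1, \dots, e_s\}$, so it suffices to check $Be_j \in V$ for $j = 1, \dots, s$. For such $j$, by construction $\lambda_i(e_j) = \delta_{ij}$, so $Te_j = -\Pi_U A e_j$, and therefore
\begin{align*}
Be_j = Ae_j + Te_j = Ae_j - \Pi_U A e_j = \Pi_V A e_j \in V.
\end{align*}
By linearity this gives $B(W) \subset V$, i.e.\ $V$ is $B$-semi-invariant. I do not expect any real obstacle here: the one point requiring care is handling the case $V \not\subset \cald(A)$ correctly, which is why the sum defining $T$ runs only over a basis of $W$ rather than all of $V$; everything else is routine linear algebra plus the bounded-perturbation fact that $A + T$ still generates a $C_0$-semigroup.
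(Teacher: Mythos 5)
Your proof is correct and is essentially the paper's argument: the paper fixes a decomposition $H = U \oplus V$, extends $A|_{V \cap \cald(A)}$ by zero to an operator $\tilde{A} \in L(V,H)$ and sets $T := -\Pi_U \tilde{A} \Pi_V$, which is exactly the finite-rank operator you build via coordinate functionals, and the verification $Be_j = \Pi_V A e_j \in V$ is the same computation. (The remark about $A+T$ generating a $C_0$-semigroup is not needed for this lemma; the paper handles it separately in Lemma~\ref{lemma-equiv}.)
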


\begin{proof}
Let $H = U \oplus V$ be a direct sum decomposition of the Hilbert space $H$ with a closed subspace $U$. We denote by $\Pi_U : H \to U$ and $\Pi_V : H \to V$ the corresponding projections. There exists a subspace $E \subset V$ such that $V = (V \cap \cald(A)) \oplus E$. Let $\tilde{A} \in L(V,H)$ be the linear operator given by $\tilde{A}|_{V \cap \cald(A)} = A|_{V \cap \cald(A)}$ and $\tilde{A}|_E = 0$. We define the linear operator $T \in L(H)$ as $T := -\Pi_U \tilde{A} \Pi_V$. Then, for each $v \in V \cap \cald(A)$ we have
\begin{align*}
Bv = Av - \Pi_U A v = \Pi_V A v \in V,
\end{align*}
showing that $V$ is $B$-semi-invariant.
\end{proof}

\begin{lemma}\label{lemma-equiv}
Let $T \in L(H)$ be a linear operator, let the linear operator $B : \cald(B) \subset H \to H$ be given by $\cald(B) := \cald(A)$ and $B := A + T$, and let $\beta : H \to H$ be given by $\beta := \alpha - T$. Then the following statements are true:
\begin{enumerate}
\item $B$ is the generator of a $C_0$-semigroup on $H$.

\item $\beta$ is Lipschitz continuous.

\item The SPDEs (\ref{SPDE}) and (\ref{SPDE-B}) are equivalent. 
\end{enumerate} 
\end{lemma}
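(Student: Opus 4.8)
The plan is to dispatch the three claims separately, each reducing to a standard fact. For (1): since $A$ generates a pseudo-contractive $C_0$-semigroup $(S_t)_{t \ge 0}$ and $T \in L(H)$ is bounded, the standard bounded perturbation theorem for generators of $C_0$-semigroups applies, so $B = A + T$ with $\cald(B) = \cald(A)$ generates a $C_0$-semigroup $(\tilde S_t)_{t \ge 0}$; moreover, the Duhamel identity $\tilde S_t h = S_t h + \int_0^t S_{t-s} T \tilde S_s h \, ds$ together with Gronwall's inequality shows that $(\tilde S_t)_{t \ge 0}$ is again pseudo-contractive. In particular (\ref{SPDE-B}) satisfies the standing assumptions of Section~\ref{sec-SPDEs}, hence has a unique weak solution for every starting point. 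For (2): the bounded operator $T$ is Lipschitz continuous with Lipschitz constant $\|T\|_{L(H)}$, so $\beta = \alpha - T$ is Lipschitz continuous as a difference of two Lipschitz continuous mappings.

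It remains to prove (3). First I would note that $\cald(B^*) = \cald(A^*)$ and $B^* = A^* + T^*$: for $\xi, \eta \in H$, the relation $\langle Bx, \xi \rangle = \langle x, \eta \rangle$ for all $x \in \cald(A)$ is, using $\langle Tx, \xi \rangle = \langle x, T^* \xi \rangle$, equivalent to $\langle Ax, \xi \rangle = \langle x, \eta - T^* \xi \rangle$ for all $x \in \cald(A)$, i.e.\ to $\xi \in \cald(A^*)$ and $\eta = A^* \xi + T^* \xi$. Consequently, for every $\xi \in \cald(A^*) = \cald(B^*)$ and every $H$-valued process $r$ one has the pointwise identity $\langle B^* \xi, r_s \rangle + \langle \xi, \beta(r_s) \rangle = \langle A^* \xi, r_s \rangle + \langle \xi, \alpha(r_s) \rangle$, because the two contributions of $T$ cancel via $\langle T^* \xi, r_s \rangle = \langle \xi, T r_s \rangle$. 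Thus the drift integrand in the weak formulation of (\ref{SPDE-B}) coincides with that of (\ref{SPDE}), while the stochastic integral term and the initial value are literally the same in the two equations. Hence a process $r$ is a weak solution to (\ref{SPDE}) with $r_0 = h_0$ if and only if it is a weak solution to (\ref{SPDE-B}) with $r_0 = h_0$. Since weak solutions are unique up to indistinguishability, the weak solution to (\ref{SPDE}) with $r_0 = h_0$ coincides with the weak solution to (\ref{SPDE-B}) with $r_0 = h_0$, which is the asserted equivalence.

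There is essentially no genuine obstacle here: the mathematical content is the bounded perturbation theorem together with the bookkeeping identity $\langle T^* \xi, r_s \rangle = \langle \xi, T r_s \rangle$ that makes the two drifts collapse to the same expression. The only point one should not skip is checking that $\cald(B^*) = \cald(A^*)$, so that the weak formulations of the two SPDEs are tested against exactly the same class of functionals.
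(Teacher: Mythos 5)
Your proposal is correct and follows essentially the same route as the paper: part (1) via the bounded perturbation theorem (the paper cites \cite[Thm.~3.1.1]{Pazy}), part (2) as a difference of Lipschitz maps, and part (3) by using $\cald(B^*) = \cald(A^*)$, $B^* = A^* + T^*$ and the cancellation $\langle T^* \xi, r_s \rangle = \langle \xi, T r_s \rangle$ to identify the two weak formulations. Your additional verifications — that $\cald(B^*) = \cald(A^*)$ with $B^* = A^* + T^*$, and that the perturbed semigroup remains pseudo-contractive — are details the paper leaves implicit, and they are welcome but do not change the argument.
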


\begin{proof}
The first statement is a consequence of \cite[Thm.~3.1.1]{Pazy}, and the second statement follows from the Lipschitz continuity of $\alpha$ and $T$. For the proof of the third statement, let $h_0 \in H$ be arbitrary, and let $r$ be the weak solution to (\ref{SPDE-B}) with $r_0 = h_0$. Noting that $\cald(A^*) = \cald(B^*)$ and $B^* = A^* + T^*$, for each $\xi \in \cald(B^*)$ we obtain
\begin{align*}
\langle \xi,r_t \rangle &= \langle \xi,h_0 \rangle + \int_0^t \big( \langle B^* \xi, r_s \rangle + \langle \xi,\beta(r_s) \rangle \big) ds + \int_0^t \sigma(r_{s-}) dX_s
\\ &= \langle \xi,h_0 \rangle + \int_0^t \big( \langle A^* \xi + T^* \xi, r_s \rangle + \langle \xi,\alpha(r_s) - T r_s \rangle \big) ds + \int_0^t \sigma(r_{s-}) dX_s
\\ &= \langle \xi,h_0 \rangle + \int_0^t \big( \langle A^* \xi, r_s \rangle + \langle \xi,\alpha(r_s) \rangle \big) ds + \int_0^t \sigma(r_{s-}) dX_s, \quad t \in \bbr_+,
\end{align*}
showing that $r$ is also a weak solution to (\ref{SPDE}) with $r_0 = h_0$. An analogous calculation shows that the weak solution to (\ref{SPDE}) with $r_0 = h_0$ is also a weak solution to (\ref{SPDE-B}) with $r_0 = h_0$.
\end{proof}

\section{Direct sum decompositions of Hilbert spaces}\label{sec-Hilbert}

In this section, we will provide the required results about direct sum decompositions of Hilbert spaces. In particular, we will show that condition (2) from Theorem~\ref{thm-intro} does not depend on the choice of the decomposition. For what follows, let $H$ be a Hilbert space.

\begin{lemma}\label{lemma-const}
Let $V \subset H$ be a finite dimensional subspace, let $E \subset H$ be a subset, and let $\beta : E \to H$ be a mapping. Then the following statements are equivalent:
\begin{enumerate}
\item[(i)] There exists a closed subspace $U$ such that $H = U \oplus V$ and the mapping $\Pi_U \beta$ is constant on $E$.

\item[(ii)] For every closed subspace $U$ with $H = U \oplus V$ the mapping $\Pi_U \beta$ is constant on $E$.
\end{enumerate}
\end{lemma}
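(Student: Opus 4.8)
The plan is to dispose of the easy direction first and then spend the effort on $(i)\Rightarrow(ii)$. The implication $(ii)\Rightarrow(i)$ is immediate once one observes that at least one admissible decomposition exists: since $V$ is finite dimensional it is closed, so $U:=V^\perp$ is a closed subspace with $H=U\oplus V$; then $(ii)$ applied to this particular $U$ yields $(i)$. (If $E=\emptyset$ everything is vacuous, so we may assume $E\neq\emptyset$.) Hence the only real content is $(i)\Rightarrow(ii)$.

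For $(i)\Rightarrow(ii)$ I would fix a closed subspace $U_1$ with $H=U_1\oplus V$ such that $\Pi_{U_1}\beta$ is constant on $E$, say $\Pi_{U_1}\beta(x)=c$ for all $x\in E$, and let $U_2$ be an arbitrary closed subspace with $H=U_2\oplus V$. Write $\Pi_{U_i}$ for the projection onto $U_i$ along $V$ and $\Pi_V^{(i)}:=\mathrm{Id}_H-\Pi_{U_i}$ for the projection onto $V$ along $U_i$ ($i=1,2$). The point to isolate is that both decompositions share the \emph{same} $V$-part, which is exactly the kernel of $\Pi_{U_2}$: for $v\in V$ the unique decomposition along $H=U_2\oplus V$ is $v=0+v$, so $\Pi_{U_2}v=0$. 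This is the only structural fact the proof needs.

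With this in hand the computation is one line. For any $x\in E$ decompose $\beta(x)=\Pi_{U_1}\beta(x)+\Pi_V^{(1)}\beta(x)=c+\Pi_V^{(1)}\beta(x)$, where $\Pi_V^{(1)}\beta(x)\in V$. Applying $\Pi_{U_2}$ and using that it annihilates $V$ gives $\Pi_{U_2}\beta(x)=\Pi_{U_2}c$, which is independent of $x$. Therefore $\Pi_{U_2}\beta$ is constant on $E$, and since $U_2$ was arbitrary, $(ii)$ follows.

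I do not expect a genuine obstacle: the statement is at bottom a linear-algebra fact about how two oblique projections with a common kernel $V$ interact, and the closedness of the subspaces together with the Hilbert space structure enters only to guarantee that such complementary decompositions exist and that the projections are bounded operators; the argument itself makes no use of continuity of $\beta$ or of any metric property of $E$. The one thing to be careful about when writing it up is to keep the two ``$U$''-parts genuinely distinct while exploiting that the ``$V$''-part is common to both decompositions.
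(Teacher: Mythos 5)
Your proposal is correct and follows essentially the same route as the paper: the reverse implication is handled by taking $U = V^{\perp}$, and the forward implication rests on the observation that both decompositions share the same $V$-component, so that the new projection applied to the old decomposition yields a constant (the paper phrases this by splitting the constant $u = \tilde{u} + v$ along the new decomposition, which is the same computation as your application of $\Pi_{U_2}$ annihilating $V$). No gap to report.
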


\begin{proof}
(i) $\Rightarrow$ (ii): Let $\tilde{U}$ be an arbitrary closed subspace such that $H = \tilde{U} \oplus V$. By assumption there exists $u \in U$ such that $\Pi_U \beta(h) = u$ for all $h \in E$. There exist unique $\tilde{u} \in \tilde{U}$ and $v \in V$ such that $u = \tilde{u} + v$. Therefore, we have
\begin{align*}
\beta(h) = \tilde{u} + v + \Pi_V \beta(h) \quad \text{for all $h \in E$,}
\end{align*}
and hence $\Pi_{\tilde{U}} \beta(h) = \tilde{u}$ for all $h \in E$, showing that $\Pi_{\tilde{U}} \beta$ is constant on $E$.

\noindent (ii) $\Rightarrow$ (i): This implication follows by choosing $U = V^{\perp}$.
\end{proof}

We use the following definition for the formulation of condition (2) from Theorem~\ref{thm-intro}.

\begin{definition}\label{def-const}
Let $V \subset H$ be a finite dimensional subspace, let $E \subset H$ be a subset, and let $\beta : E \to H$ be a mapping. We say that $\Pi_{(\bullet,V)} \beta$ is constant on $E$ if there exists a closed subspace $U$ such that $H = U \oplus V$ and the mapping $\Pi_U \beta$ is constant on $E$.
\end{definition}

\begin{remark}
By virtue of Lemma~\ref{lemma-const}, the Definition~\ref{def-const} does not depend on the choice of the subspace $U$.
\end{remark}

\section{Invariant foliations}\label{sec-foliations}

In this section, we will present the required results about $C^0$-foliations. The general mathematical framework is that of Section \ref{sec-SPDEs}. Let $V \subset H$ be a finite dimensional subspace. Throughout this section, we assume that $V$ is $A$-semi-invariant. Recall that, according to Lemmas~\ref{lemma-transformation} and  \ref{lemma-equiv}, this does not mean a restriction.

\begin{definition}
Let $k \in \bbn_0$ be a nonnegative integer. A family $(\calm_t)_{t \in \bbr_+}$ of subsets $\calm_t \subset H$, $t \in \bbr_+$ is called a \emph{$C^k$-foliation} generated by $V$ if there exists a mapping $\psi \in C^k(\bbr_+;H)$ such that
\begin{align*}
\calm_t = \psi(t) + V \quad \text{for all $t \in \bbr_+$.}
\end{align*}
In this case, the mapping $\psi$ is called a \emph{parametrization} of the foliation $(\calm_t)_{t \in \bbr_+}$.
\end{definition}

For what follows, let $(\calm_t)_{t \in \bbr_+}$ be a $C^0$-foliation generated by $V$. Here is the formal definition of invariance of the foliation.

\begin{definition}\label{def-inv-foliation}
The foliation $(\mathcal{M}_t)_{t \in \bbr_+}$ is called \emph{invariant} for the SPDE (\ref{SPDE}) if for all $t_0 \in
\mathbb{R}_+$ and $h_0 \in \mathcal{M}_{t_0}$ we have $r_{\bullet} \in \calm_{t_0 + \bullet}$ up to an evanescent set\footnote[1]{A random set $A \subset \Omega \times \mathbb{R}_+$ is called \emph{evanescent} if the set $\{ \omega \in \Omega : (\omega,t) \in A \text{ for some } t \in \mathbb{R}_+ \}$ is a $\mathbb{P}$-nullset, cf. \cite[1.1.10]{Jacod-Shiryaev}.}, where $r$ denotes the weak solution to (\ref{SPDE}) with $r_0 = h_0$.
\end{definition}

In order to prepare the notation for our next result, we define the union $\bbm := \bigcup_{t \in \bbr_+} \calm_t$. Furthermore, we fix a direct sum decomposition $H = U \oplus V$ of the Hilbert space $H$ with a closed subspace $U$, and denote by $\Pi_U : H \to U$ and $\Pi_V : H \to V$ the corresponding projections.

\begin{theorem}\label{thm-foliation}
The following statements are equivalent:
\begin{enumerate}
\item[(i)] The foliation $(\calm_t)_{t \in \bbr_+}$ is invariant for the SPDE (\ref{SPDE}).

\item[(ii)] The following conditions are satisfied:
\begin{align}\label{V-in-domain}
V &\text{ is $A$-invariant (or equivalently: $V \subset \cald(A)$)},
\\ \label{decomp-alpha} \Pi_U \alpha &\text{ is constant on $\calm_t$,} \quad \text{for each $t \in \bbr_+$,}
\\ \label{sigma-in-V} \sigma^k(\mathbb{M}) &\subset V, \quad k=1,\ldots,m,
\end{align}
and the weak solution $\psi : \bbr_+ \to H$ to the $H$-valued PDE 
\begin{align}\label{PDE}
\left\{
\begin{array}{rcl}
\frac{d\psi(t)}{dt} & = & A \psi(t) + \Pi_U \alpha(\psi(t))
\medskip
\\ \psi(0) & = & u_0,
\end{array}
\right.
\end{align}
where $u_0 \in U$ denotes the unique element such that $\mathcal{M}_0 \cap U = \{ u_0 \}$, is a parametrization of the foliation $(\calm_t)_{t \in \bbr_+}$.
\end{enumerate}
\end{theorem}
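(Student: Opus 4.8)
The plan is to prove the two implications separately; I treat the (easier) implication (ii)$\Rightarrow$(i) first.

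\textbf{(ii)$\Rightarrow$(i).} This is the constructive direction. Fix $t_0\in\bbr_+$ and $h_0\in\calm_{t_0}$; since $\psi$ parametrizes the foliation, $h_0=\psi(t_0)+v_0$ for a unique $v_0\in V$. As $V$ is $A$-invariant, $A|_V\in L(V)$, and whenever $Y_{t-}\in V$ we have $\psi(t_0+t)+Y_{t-}\in\calm_{t_0+t}\subset\bbm$, so by (\ref{sigma-in-V}) the coefficients of the finite-dimensional, $V$-valued SDE
\[
dY_t=\big(A Y_t+\Pi_V\alpha(\psi(t_0+t)+Y_t)\big)\,dt+\sigma(\psi(t_0+t)+Y_{t-})\,dX_t,\qquad Y_0=v_0,
\]
take values in $V$ and are Lipschitz in the state; hence a unique global $V$-valued solution $Y$ exists by standard theory. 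Set $r_t:=\psi(t_0+t)+Y_t$. Testing against $\xi\in\cald(A^*)$ and using the weak form of (\ref{PDE}) for $\psi(t_0+\bullet)$, the identity $\langle\xi,A Y_s\rangle=\langle A^*\xi,Y_s\rangle$ (valid since $Y_s\in V\subset\cald(A)$), the decomposition $\alpha=\Pi_U\alpha+\Pi_V\alpha$, and (\ref{decomp-alpha}) (which gives $\Pi_U\alpha(\psi(t_0+s))=\Pi_U\alpha(r_s)$, as $\psi(t_0+s),r_s\in\calm_{t_0+s}$), one verifies that $r$ satisfies the weak formulation of (\ref{SPDE}) with $r_0=h_0$. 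By uniqueness of weak solutions, $r$ is the weak solution, and $r_t\in\psi(t_0+t)+V=\calm_{t_0+t}$; hence the foliation is invariant.

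\textbf{(i)$\Rightarrow$(ii).} Fix a continuous parametrization of the foliation and set $\pi(t):=\Pi_U h$ for $h\in\calm_t$; then $\pi:\bbr_+\to U$ is continuous, $\calm_t=\pi(t)+V$ and $\pi(0)=u_0$. The single fact driving the argument is: for $h_0\in\calm_{t_0}$, invariance forces $\Pi_U r_t=\pi(t_0+t)$, so \emph{the $U$-component of the solution is deterministic and continuous}. The first step is (\ref{sigma-in-V}). Fix $h^*\in\bbm$, say $h^*\in\calm_{t_0}$, and take $r_0=h^*$. The jumps of the weak solution satisfy $\Delta r_u=\sigma(r_{u-})\Delta X_u$ while $\Pi_U r$ has no jumps; moreover, for $\xi\in\cald(A^*)\cap V^\perp$ (a subspace dense in $V^\perp$, since $\cald(A^*)$ is dense and $V$ is finite-dimensional), $\langle\xi,r_\bullet\rangle=\langle\xi,\pi(t_0+\bullet)\rangle$ is a deterministic continuous semimartingale, hence of finite variation, hence its quadratic variation $\sum_{k,l}\int_0^\bullet\langle\xi,\sigma^k(r_{s-})\rangle\langle\xi,\sigma^l(r_{s-})\rangle\,d[X^k,X^l]_s$ vanishes identically. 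Letting $s\downarrow 0$ (so $r_{s-}\to h^*$) and using the nontriviality of the components $X^1,\dots,X^m$ together with the L\'{e}vy structure of $X$ forces $\langle\xi,\sigma^k(h^*)\rangle=0$ for all $k$ and all such $\xi$; by density, $\sigma^k(h^*)\in V$. I expect this step to require the most care in making rigorous (support-theoretic reasoning in the spirit of the invariant-manifold literature).

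\textbf{The remaining two conditions and the parametrization.} For (\ref{V-in-domain}) -- which is the genuine obstacle, since a merely $C^0$ parametrization cannot be differentiated -- compare the weak solutions $r,r'$ started at $h_0$ and $h_0+v$, with $h_0\in\calm_{t_0}$ and $v\in V$. Invariance gives $D_t:=r_t-r'_t\in V$, and since $\sigma^k(\bbm)\subset V$ the process $W_t:=D_t-\int_0^t(\sigma(r_{s-})-\sigma(r'_{s-}))\,dX_s$ is $V$-valued and satisfies, for every $\xi\in\cald(A^*)$,
\[
\langle\xi,W_t\rangle=\langle\xi,v\rangle+\int_0^t\big(\langle A^*\xi,D_s\rangle+\langle\xi,\alpha(r_s)-\alpha(r'_s)\rangle\big)\,ds
\]
(the stochastic integrals cancel). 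Choosing $\xi_1,\dots,\xi_N\in\cald(A^*)$ whose restrictions to $V$ form a basis of $V^*$ -- possible since $\cald(A^*)$ is dense in $H$ -- the map $V\ni w\mapsto(\langle\xi_j,w\rangle)_j\in\bbr^N$ is a linear isomorphism, and each $t\mapsto\langle\xi_j,W_t\rangle$ is the primitive of a c\`{a}dl\`{a}g integrand, hence right-differentiable at $0$; therefore $W$ is right-differentiable at $0$ in $V$. Comparing the two expressions for this right-derivative shows that $\xi\mapsto\langle A^*\xi,v\rangle$ is represented by a vector of $H$; since $A$ is closed, $v\in\cald(A)$. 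Thus $V\subset\cald(A)$, and with $A$-semi-invariance, $V$ is $A$-invariant -- so now $S_t(V)\subset V$ and $A^*$ leaves $\cald(A^*)\cap V^\perp$ invariant. For (\ref{decomp-alpha}): testing the $W$-identity against $\xi\in\cald(A^*)\cap V^\perp$ kills $\langle A^*\xi,D_s\rangle$, giving $\int_0^t\langle\xi,\alpha(r_s)-\alpha(r'_s)\rangle\,ds=0$; differentiating and letting $s\downarrow 0$ yields $\Pi_U\alpha(h_0)=\Pi_U\alpha(h_0+v)$, i.e. $\Pi_U\alpha$ is constant on $\calm_{t_0}$. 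Finally, to identify the parametrization, let $\bar r$ be the weak solution from $u_0$; projecting its mild form by $\Pi_U$ and using $S_t(V)\subset V$, (\ref{sigma-in-V}) and (\ref{decomp-alpha}) gives $\pi(t)=\Pi_U S_t u_0+\int_0^t\Pi_U S_{t-s}\,\Pi_U\alpha(\pi(s))\,ds$. Introducing the deterministic curve $\chi(t):=S_t u_0+\int_0^t S_{t-s}\,\Pi_U\alpha(\pi(s))\,ds$, one has $\Pi_U\chi=\pi$, hence $\chi(t)-\pi(t)\in V$, hence $\chi(t)\in\calm_t$ and $\Pi_U\alpha(\chi(t))=\Pi_U\alpha(\pi(t))$ by (\ref{decomp-alpha}); so $\chi$ is a mild solution of (\ref{PDE}), whence $\chi=\psi$ by uniqueness, and $\psi(t)+V=\chi(t)+V=\pi(t)+V=\calm_t$. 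Thus $\psi$ is a parametrization, completing the proof.
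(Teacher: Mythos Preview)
Your proof is essentially correct and reaches the same conclusion, but it is organized quite differently from the paper's argument and uses some genuinely different tools.

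\textbf{Where the two proofs diverge.} For (i)$\Rightarrow$(ii) the paper fixes elements $\zeta_1,\dots,\zeta_d\in\cald(A^*)$ with $T:\bbr^d\to V$ an isomorphism and $T^{-1}=\langle\zeta,\cdot\rangle$ (a lemma from \cite{Tappe-Wiener}), introduces the $\bbr^d$-valued coordinate processes $Z^{h_0}=\langle\zeta,r^{h_0}-\phi(t_0+\cdot)\rangle$, and then compares, for each $\xi\in\cald(A^*)$, the representation of $\langle\xi,r^{h_0+v}-r^{h_0}\rangle$ obtained via $T$ with the one coming directly from the weak formulation. Matching the finite-variation and martingale parts yields first $v\in\cald(A)$ (via $\xi\mapsto\langle A^*\xi,v\rangle$ continuous, hence $v\in\cald(A^{**})=\cald(A)$), then $\alpha(h_0+v)-\alpha(h_0)\in V$, and finally $\sigma(h_0)\in V^m$. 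The parametrization claim is then a one-line consequence of Lemma~\ref{lemma-foliations} with $t_0=0$, $v_0=0$. You instead (a) prove $\sigma^k(\bbm)\subset V$ \emph{first}, by testing against $\xi\in\cald(A^*)\cap V^\perp$ and exploiting that $\langle\xi,r\rangle$ is deterministic; (b) then subtract the now $V$-valued stochastic integral to obtain the $V$-valued process $W$, whose right-derivative at $0$ gives the continuity of $\xi\mapsto\langle A^*\xi,v\rangle$; (c) obtain (\ref{decomp-alpha}) by the same orthogonality trick; and (d) identify $\psi$ by projecting the \emph{mild} form of the solution from $u_0$ and invoking uniqueness for (\ref{PDE}). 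The paper's route is more uniform (everything through the single isomorphism $T$), while yours is more modular---getting (\ref{sigma-in-V}) first genuinely simplifies the later steps.

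\textbf{A few points to tighten.} Your sentence ``$A^*$ leaves $\cald(A^*)\cap V^\perp$ invariant'' is stronger than what you use and need: what holds (and suffices) is $A^*(\cald(A^*)\cap V^\perp)\subset V^\perp$, once $V$ is $A$-invariant. The density of $\cald(A^*)\cap V^\perp$ in $V^\perp$ is true, but the reason is not merely ``$\cald(A^*)$ dense and $V$ finite-dimensional''; one needs to pick $\xi_1,\dots,\xi_d\in\cald(A^*)$ with $\det(\langle\xi_i,v_j\rangle)\neq 0$ and correct approximants---this is exactly the content of the lemma the paper cites. The step ``deterministic continuous semimartingale $\Rightarrow$ finite variation'' is correct but not entirely trivial (it follows, e.g., from Stricker's theorem applied with the trivial sub-filtration). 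Finally, concluding $\langle\xi,\sigma^k(h^*)\rangle=0$ for \emph{each} $k$ from the vanishing of $\int\langle\xi,\sigma(r_{s-})\rangle\,dX_s$ tacitly uses a non-degeneracy of the components $X^1,\dots,X^m$; the paper's proof makes the same tacit use when matching the martingale parts of (\ref{proof-foli-3a}) and (\ref{proof-foli-3b}), so this is not a discrepancy between the two arguments.
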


Before we provide the proof, we prepare an auxiliary result.

\begin{lemma}\label{lemma-foliations}
Suppose that conditions (\ref{V-in-domain})--(\ref{sigma-in-V}) are fulfilled, and let $\psi : \bbr_+ \to H$ be the weak solution to the PDE (\ref{PDE}). Then, for all $t_0 \in \bbr_+$ and all $v_0 \in V$ the following statements are true:
\begin{enumerate}
\item The SDE
\begin{align}\label{SDE-Y}
\left\{
\begin{array}{rcl}
dY_t & = & (A Y_t + \Pi_V \alpha(\psi(t_0+t) + Y_t)) dt + \sigma(\psi(t_0+t) + Y_{t-}) dX_t \medskip
\\ Y_0 & = & v_0
\end{array}
\right.
\end{align}
has a unique $V$-valued strong solution.

\item The process $r := \psi(t_0 + \bullet) + Y$ is the weak solution to the SPDE (\ref{SPDE}) with $r_0 = h_0$, where $h_0 := \psi(t_0) + v_0$.
\end{enumerate}
\end{lemma}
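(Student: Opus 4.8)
The plan is to establish both statements in Lemma~\ref{lemma-foliations} essentially by reducing everything to the finite dimensional subspace $V$ and using the equivalence between weak solutions of the SPDE and the corresponding integral equation against test functionals $\xi \in \cald(A^*)$.

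First I would address statement (1), the well-posedness of the SDE~(\ref{SDE-Y}) on the finite dimensional space $V$. Since $V \subset \cald(A)$ by~(\ref{V-in-domain}) and $V$ is $A$-invariant, the restriction $A|_V$ is a bounded linear operator on the finite dimensional space $V$; likewise, $\Pi_V \alpha$ restricted to the affine slices is Lipschitz with values in $V$ by construction, and $\sigma$ maps into $V^m$ along $\bbm$ by~(\ref{sigma-in-V}). One subtlety: the coefficients in~(\ref{SDE-Y}) are evaluated at $\psi(t_0+t) + Y_t$, a point of $\calm_{t_0+t} \subset \bbm$, so the relevant values of $\alpha$ and $\sigma$ are exactly those covered by~(\ref{decomp-alpha}) and~(\ref{sigma-in-V}); but to invoke a standard existence and uniqueness theorem on $V$ I need globally Lipschitz coefficients, which I obtain by composing the maps $v \mapsto \Pi_V \alpha(\psi(t_0+t)+v)$ and $v \mapsto \sigma(\psi(t_0+t)+v)$ with (if necessary) a smooth cutoff, or simply by noting that $\alpha$ and $\sigma$ are Lipschitz on all of $H$ so these compositions are Lipschitz uniformly in $t$ on compact time intervals, with linear growth controlled by the continuity of $\psi$. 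Then classical results on SDEs driven by square-integrable Lévy martingales in finite dimensions (e.g.\ via the contraction mapping principle in an appropriate $L^2$ space) yield a unique $V$-valued càdlàg adapted strong solution $Y$.

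Next, for statement (2), I would set $r_t := \psi(t_0+t) + Y_t$ and verify that $r$ satisfies the weak-solution integral equation for~(\ref{SPDE}) with $r_0 = \psi(t_0) + v_0 = h_0$. Fix $\xi \in \cald(A^*)$. For the deterministic part $\psi(t_0+\bullet)$ I use that $\psi$ is the weak solution to the PDE~(\ref{PDE}), so $\langle \xi, \psi(t_0+t)\rangle = \langle \xi, \psi(t_0)\rangle + \int_0^t (\langle A^*\xi, \psi(t_0+s)\rangle + \langle \xi, \Pi_U\alpha(\psi(t_0+s))\rangle)\,ds$. For the $V$-valued part $Y$, because $V \subset \cald(A)$ and $A|_V$ is bounded, I can pair the strong SDE~(\ref{SDE-Y}) with $\xi$ to get $\langle \xi, Y_t\rangle = \langle \xi, v_0\rangle + \int_0^t (\langle \xi, A Y_s\rangle + \langle \xi, \Pi_V\alpha(\psi(t_0+s)+Y_s)\rangle)\,ds + \int_0^t \langle \xi, \sigma(\psi(t_0+s)+Y_{s-})\rangle\,dX_s$, and here I replace $\langle \xi, AY_s\rangle$ by $\langle A^*\xi, Y_s\rangle$ since $Y_s \in V \subset \cald(A)$. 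Adding the two identities and using the decomposition $\alpha = \Pi_U\alpha + \Pi_V\alpha$ together with the fact that by~(\ref{decomp-alpha}) the value $\Pi_U\alpha(\psi(t_0+s))$ equals $\Pi_U\alpha(\psi(t_0+s)+Y_s)$ (both points lie on $\calm_{t_0+s}$, on which $\Pi_U\alpha$ is constant), the two $\alpha$-terms combine into $\langle \xi, \alpha(r_s)\rangle$. Similarly the $A^*$-terms combine into $\langle A^*\xi, r_s\rangle$ and the stochastic integral becomes $\int_0^t \langle \xi, \sigma(r_{s-})\rangle\,dX_s$. This is precisely the weak-solution equation for~(\ref{SPDE}), and uniqueness of weak solutions (from the Remark following the definition of weak solution) identifies $r$ with the weak solution started at $h_0$.

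I expect the main obstacle to be a careful handling of the Lipschitz/growth estimates needed to apply the finite dimensional existence theorem to~(\ref{SDE-Y}): the coefficients depend on time through $\psi$, so one needs uniform-in-time Lipschitz bounds on compact intervals and linear growth, which follow from the Lipschitz continuity of $\alpha$ and $\sigma$ on $H$ and the continuity (hence local boundedness) of $\psi$, but this should be spelled out. The algebraic bookkeeping in step~(2)—splitting $\alpha$ and recombining the terms while invoking~(\ref{decomp-alpha}) to exchange $\Pi_U\alpha(\psi(t_0+s))$ with $\Pi_U\alpha(r_s)$—is routine once the setup is in place, as is the passage $\langle \xi, A Y_s\rangle = \langle A^*\xi, Y_s\rangle$, which is legitimate precisely because of~(\ref{V-in-domain}).
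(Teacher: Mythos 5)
Your proposal is correct and follows essentially the same route as the paper: statement (1) is reduced to a finite dimensional SDE on $V$ with Lipschitz coefficients (using (\ref{V-in-domain}) and (\ref{sigma-in-V})), and statement (2) is verified by pairing with $\xi \in \cald(A^*)$, adding the weak formulation of the PDE (\ref{PDE}) for $\psi(t_0+\bullet)$ to that of the SDE (\ref{SDE-Y}) for $Y$, and invoking (\ref{decomp-alpha}) to replace $\Pi_U\alpha(\psi(t_0+s))$ by $\Pi_U\alpha(\psi(t_0+s)+Y_s)$ so the terms recombine into the weak-solution identity for (\ref{SPDE}). The only cosmetic difference is that you write $\langle \xi, A Y_s\rangle$ and then pass to $\langle A^*\xi, Y_s\rangle$, whereas the paper writes the adjoint pairing directly; this is the same step.
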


\begin{proof}
The first statement follows from (\ref{V-in-domain}) and (\ref{sigma-in-V}). For the proof of the second statement, let $\xi \in \cald(A^*)$ be arbitrary. Then, by (\ref{PDE}) and (\ref{decomp-alpha}) we have
\begin{align*}
&\langle \xi,\psi(t_0+t) \rangle = \langle \xi,\psi(t_0) \rangle + \langle \xi,\psi(t_0+t) - \psi(t_0) \rangle
\\ &= \langle \xi,\psi(t_0) \rangle + \int_{t_0}^{t_0+t} \big( \langle A^* \xi,\psi(s) \rangle + \langle \xi,\Pi_U \alpha (\psi(s)) \big) ds
\\ &= \langle \xi,\psi(t_0) \rangle + \int_{0}^{t} \big( \langle A^* \xi,\psi(t_0+s) \rangle + \langle \xi,\Pi_U \alpha(\psi(t_0+s) + Y_s) \big) ds, \quad t \in \bbr_+.
\end{align*}
Furthermore, by (\ref{SDE-Y}) we have
\begin{align*}
\langle \xi,Y_t \rangle &= \langle \xi,v_0 \rangle + \int_0^t \big( \langle A^* \xi,Y_s \rangle + \langle \xi,\Pi_V \alpha(\psi(t_0+s) + Y_s) \rangle \big) ds 
\\ &\quad + \int_0^t \langle \xi,\sigma(\psi(t_0+s) + Y_{s-}) \rangle dX_s, \quad t \in \bbr_+.
\end{align*}
Therefore, we arrive at
\begin{align*}
\langle \xi,r_t \rangle = \langle \xi,h_0 \rangle + \int_0^t \big( \langle A^* \xi,r_s \rangle + \langle \xi,\alpha(r_s) \rangle \big) ds + \int_0^t \langle \xi,\sigma(r_{s-}) \rangle dX_s, \quad t \in \bbr_+,
\end{align*}
showing that $r$ is the weak solution to (\ref{SPDE}) with $r_0 = h_0$.
\end{proof}

\begin{proof}[Proof of Theorem~\ref{thm-foliation}]
(i) $\Rightarrow$ (ii): Let $d := \dim V$ and let $\phi : \bbr_+ \to \bbm$ be a parametrization of the foliation $(\calm_t)_{t \in \bbr_+}$. According to \cite[Lemma~2.10]{Tappe-Wiener} there exist $\zeta_1,\ldots,\zeta_d \in \cald(A^*)$ and an isomorphism $T : \bbr^d \to V$ such that $T^{-1} = \langle \zeta,\bullet \rangle$, where we use the notation 
\begin{align*}
\langle \zeta,h \rangle := (\langle \zeta_1,h \rangle,\ldots,\langle \zeta_d,h \rangle) \in \bbr^d \quad \text{for $h \in V$.}
\end{align*}
We define the continuous mappings $\tilde{\alpha} : \bbr_+ \times \bbr^d \to \bbr^d$ and $\tilde{\sigma} : \bbr_+ \times \bbr^d \to (\bbr^d)^n$ as
\begin{align*}
\tilde{\alpha}(t,z) &:= \langle A^* \zeta, \phi(t) + Tz \rangle + \langle \zeta,\alpha(\phi(t)+Tz) \rangle,
\\ \tilde{\sigma}(t,z) &:= \langle \zeta,\sigma(\phi(t)+Tz) \rangle.
\end{align*}
Furthermore, for $t_0 \in \bbr_+$ and $h_0 \in \calm_{t_0}$ we define the process
\begin{align*}
Z^{h_0} := \langle \zeta,r^{h_0} - \phi(t_0 + \bullet) \rangle,
\end{align*}
where $r^{h_0}$ denotes the weak solution to (\ref{SPDE}) with $r_0 = h_0$. Now, let $t_0 \in \bbr_+$, $h_0 \in \calm_{t_0}$ and $v \in V$ be arbitrary. Then we have
\begin{equation}\label{proof-foli-1}
\begin{aligned}
&Z_t^{h_0 + v} - Z_t^{h_0} = \langle \zeta,r_t^{h_0 + v} - \phi(t_0 + t) \rangle - \langle \zeta,r_t^{h_0} - \phi(t_0 + t) \rangle = \langle \zeta, r_t^{h_0 + v} - r_t^{h_0} \rangle
\\ &= \langle \zeta,h_0+v \rangle + \int_0^t \big( \langle A^* \zeta, r_s^{h_0+v} \rangle + \langle \zeta, \alpha(r_s^{h_0+v}) \rangle \big) ds + \int_0^t \langle \zeta,\sigma(r_{s-}^{h_0+v}) \rangle dX_s
\\ &\quad - \langle \zeta,h_0 \rangle - \int_0^t \big( \langle A^* \zeta, r_s^{h_0} \rangle + \langle \zeta, \alpha(r_s^{h_0}) \rangle \big) ds - \int_0^t \langle \zeta,\sigma(r_{s-}^{h_0}) \rangle dX_s
\\ &= \langle \zeta,v \rangle + \int_0^t \big( \tilde{\alpha}(t_0 + s,Z_{s-}^{h_0 + v}) - \tilde{\alpha}(t_0 + s,Z_{s-}^{h_0}) \big) ds 
\\ &\quad + \int_0^t \big( \tilde{\sigma}(t_0 + s,Z_{s-}^{h_0 + v}) - \tilde{\sigma}(t_0 + s,Z_{s-}^{h_0}) \big) dX_s, \quad t \in \bbr_+.
\end{aligned}
\end{equation}
Furthermore, since the foliation $(\calm_t)_{t \in \bbr_+}$ is invariant for (\ref{SPDE}), we have $r_{\bullet}^{h_0}, r_{\bullet}^{h_0+v} \in \calm_{t_0 + \bullet}$ up to an evanescent set, and hence $r^{h_0 + v} - r^{h_0} \in V$ up to an evanescent set. Together with (\ref{proof-foli-1}) we obtain
\begin{align*}
r^{h_0 + v} - r^{h_0} &= T(Z^{h_0 + v} - Z^{h_0})
\\ &= v + \int_0^t T ( \tilde{\alpha}(t_0 + s,Z_{s-}^{h_0 + v}) - \tilde{\alpha}(t_0 + s,Z_{s-}^{h_0}) ) ds 
\\ &\quad + \int_0^t T ( \tilde{\sigma}(t_0 + s,Z_{s-}^{h_0 + v}) - \tilde{\sigma}(t_0 + s,Z_{s-}^{h_0}) ) dX_s, \quad t \in \bbr_+.
\end{align*}
Now, let $\xi \in \cald(A^*)$ be arbitrary. Then we have
\begin{equation}\label{proof-foli-2a}
\begin{aligned}
\langle \xi,r_t^{h_0 + v} - r_t^{h_0} \rangle &= \langle \xi,v \rangle + \int_0^t \langle \xi,T(\tilde{\alpha}(t_0 + s,Z_s^{h_0 + v}) - \tilde{\alpha}(t_0 + s,Z_{s-}^{h_0})) \rangle ds
\\ &\quad + \int_0^t \langle \xi,T(\tilde{\sigma}(t_0 + s,Z_{s-}^{h_0 + v}) - \tilde{\sigma}(t_0 + s,Z_{s-}^{h_0})) \rangle dX_s, \quad t \in \bbr_+.
\end{aligned}
\end{equation}
On the other hand, since $r^{h_0}$ and $r^{h_0 + v}$ are weak solutions to (\ref{SPDE}) with $r_0 = h_0$ and $r_0 = h_0 + v$, we have
\begin{equation}\label{proof-foli-2b}
\begin{aligned}
\langle \xi,r_t^{h_0 + v} - r_t^{h_0} \rangle &= \langle \xi,v \rangle + \int_0^t \big( \langle A^* \xi, r_s^{h_0 + v} - r_s^{h_0} \rangle + \langle \xi,\alpha(r_s^{h_0 + v}) - \alpha(r_s^{h_0}) \rangle \big) ds
\\ &\quad + \int_0^t \langle \xi, \sigma(r_{s-}^{h_0 + v}) - \sigma(r_{s-}^{h_0}) \rangle dX_s, \quad t \in \bbr_+.
\end{aligned}
\end{equation}
Combining (\ref{proof-foli-2a}) and (\ref{proof-foli-2b}), we obtain
\begin{align*}
\langle A^* \xi, v \rangle &= \big\langle \xi,T \big( \tilde{\alpha}(t_0 + s,\langle \zeta,h_0 - \phi(t_0) + v \rangle) - \tilde{\alpha}(t_0 + s,\langle \zeta,h_0 - \phi(t_0) \rangle) \big) \big\rangle 
\\ &\quad - \langle \xi,\alpha(h_0 + v) - \alpha(h_0) \rangle.
\end{align*}
This identity shows that $\xi \mapsto \langle A^* \xi, v \rangle$ is continuous on $\mathcal{D}(A^*)$, proving $v \in \mathcal{D}(A^{**})$. Since $A = A^{**}$, see \cite[Thm.~13.12]{Rudin}, we obtain $v \in \mathcal{D}(A)$, which yields (\ref{V-in-domain}). Therefore, we obtain
\begin{align*}
&\alpha(h_0 + v) - \alpha(h_0) 
\\ &= Av - T \big(\tilde{\alpha}(t_0 + s, \langle \zeta, h_0 - \phi(t_0) + v \rangle) - \tilde{\alpha}(t_0 + s, \langle \zeta, h_0 - \phi(t_0) \rangle ) \big) \in V,
\end{align*}
which shows that
\begin{align*}
\Pi_U \alpha(h_0 + v) - \Pi_U \alpha(h_0) = 0,
\end{align*}
proving (\ref{decomp-alpha}). A similar calculation as in (\ref{proof-foli-1}) and (\ref{proof-foli-2a}) shows that
\begin{equation}\label{proof-foli-3a}
\begin{aligned}
\langle \xi,r_t^{h_0} - \phi(t_0 + t) \rangle &= \langle \xi,h_0 - \phi(t_0 + t) \rangle + \int_0^t \langle \xi,T(\tilde{\alpha}(t_0 + s,Z_{s-}^{h_0})) \rangle ds
\\ &\quad + \int_0^t \langle \xi,T(\tilde{\sigma}(t_0 + s,Z_{s-}^{h_0})) \rangle dX_s, \quad t \in \bbr_+.
\end{aligned}
\end{equation}
On the other hand, since $r^{h_0}$ is a weak solution to (\ref{SPDE}), we have
\begin{equation}\label{proof-foli-3b}
\begin{aligned}
\langle \xi,r_t^{h_0} - \phi(t_0 + t) \rangle &= \langle \xi,h_0 - \phi(t_0 + t) \rangle + \int_0^t \big( \langle A^* \xi, r_s^{h_0} \rangle + \langle \xi,\alpha(r_s^{h_0}) \rangle \big) ds
\\ &\quad + \int_0^t \langle \xi, \sigma(r_{s-}^{h_0}) \rangle dX_s, \quad t \in \bbr_+.
\end{aligned}
\end{equation}
Therefore, we obtain
\begin{align*}
\sigma(h_0) = T \big( \tilde{\sigma}(t_0, \langle \zeta, h_0 - \phi(t_0) \rangle) \big) \in V^m,
\end{align*}
showing (\ref{sigma-in-V}). The remaining statement is a consequence of Lemma~\ref{lemma-foliations} (applied with $t_0 = 0$ and $v_0 = 0$) and the uniqueness of weak solutions to (\ref{SPDE}).

\noindent(ii) $\Rightarrow$ (i): This implication follows from Lemma~\ref{lemma-foliations} and the uniqueness of weak solutions to (\ref{SPDE}).
\end{proof}

\begin{remark}
Suppose that the mappings $\alpha : H \to H$ and $\sigma : H \to H^m$ are only continuous instead of being Lipschitz continuous. If we modify Definition~\ref{def-inv-foliation} by demanding the existence of an invariant solution for all $t_0 \in \bbr_+$ and $h_0 \in \calm_{t_0}$, then we can establish an analogous version of Theorem~\ref{thm-foliation}:
\begin{itemize}
\item The implication (i) $\Rightarrow$ (ii) remains true.

\item For the implication (ii) $\Rightarrow$ (i) we additionally assume the existence of weak solutions to (\ref{PDE}) and (\ref{SDE-Y}).
\end{itemize}
Consequently, analogous versions of Theorem~\ref{thm-intro} and its subsequent results also hold true without Lipschitz conditions -- provided that we have existence of weak solutions to equations of the types (\ref{PDE}) and (\ref{SDE-Y}).
\end{remark}

The invariance of $C^1$-foliations has been studied in \cite{Tappe-Wiener} and \cite{Tappe-Levy}. We recall that for a $C^1$-foliation $(\calm_t)_{t \in \bbr_+}$ and $t \in \bbr_+$ the tangent space is defined as $T \calm_t := \frac{d}{dt} \psi(t) + V$, where $\psi$ denotes a parametrization of $(\calm_t)_{t \in \bbr_+}$.

\begin{theorem}\label{thm-foliation-previous}
Suppose that $(\calm_t)_{t \in \bbr_+}$ is a $C^1$-foliation. Then the following statements are equivalent:
\begin{enumerate}
\item[(i)] The foliation $(\calm_t)_{t \in \bbr_+}$ is invariant for the SPDE (\ref{SPDE}).

\item[(ii)] We have
\begin{align}\label{M-in-domain}
\bbm &\subset \cald(A),
\\ \label{drift-tang} Ah + \alpha(h) &\in T \calm_t, \quad h \in \calm_t \text{ and } t \in \bbr_+,
\\ \sigma^k(\bbm) &\subset V, \quad k=1,\ldots,m.
\end{align}
\end{enumerate}
If the previous conditions are fulfilled, then for each $h_0 \in \bbm$ the weak solution to (\ref{SPDE}) with $r_0 = h_0$ is also a strong solution.
\end{theorem}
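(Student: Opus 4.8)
The plan is to deduce everything from Theorem~\ref{thm-foliation}. Since a $C^1$-foliation is in particular a $C^0$-foliation, Theorem~\ref{thm-foliation} already tells us that invariance of $(\calm_t)_{t \in \bbr_+}$ is equivalent to the conjunction of (\ref{V-in-domain}), (\ref{decomp-alpha}), (\ref{sigma-in-V}) together with the requirement that the weak solution $\psi_0$ of the PDE (\ref{PDE}) with $\psi_0(0) = u_0$, where $\{ u_0 \} = \calm_0 \cap U$, is a parametrization of $(\calm_t)_{t \in \bbr_+}$. As the third condition in statement (ii) is literally (\ref{sigma-in-V}), the equivalence (i)$\Leftrightarrow$(ii) reduces to showing that, for a $C^1$-foliation, the pair (\ref{M-in-domain})--(\ref{drift-tang}) is equivalent to (\ref{V-in-domain}), (\ref{decomp-alpha}) together with this parametrization property of $\psi_0$.

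Two of the resulting implications are purely algebraic. Condition (\ref{M-in-domain}) yields $V \subset \cald(A)$ on taking the difference of two points lying in a common leaf $\calm_t$, and hence (\ref{V-in-domain}) by the standing $A$-semi-invariance; evaluating (\ref{drift-tang}) at $h$ and at $h + v$ for $v \in V$, then subtracting and using $Av \in V$, gives $\alpha(h+v) - \alpha(h) \in V$, which is (\ref{decomp-alpha}). Conversely, once $\psi_0$ is known to be a classical solution of (\ref{PDE}) parametrizing $(\calm_t)_{t \in \bbr_+}$, one obtains $\bbm = \bigcup_{t}(\psi_0(t)+V) \subset \cald(A)$, i.e.\ (\ref{M-in-domain}), together with $A\psi_0(t) + \alpha(\psi_0(t)) = \frac{d}{dt}\psi_0(t) + \Pi_V\alpha(\psi_0(t)) \in \frac{d}{dt}\psi_0(t) + V = T\calm_t$; extending this along the leaf by means of (\ref{decomp-alpha}) and the $A$-invariance of $V$ produces (\ref{drift-tang}).

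The remaining step, which I expect to be the main obstacle, is the regularity bookkeeping that these two paragraphs presuppose: one must show that (\ref{M-in-domain})--(\ref{drift-tang}) force $\psi_0$ to be a parametrization of $(\calm_t)_{t \in \bbr_+}$, and, for the converse, that invariance forces $\psi_0$ to be a classical, $\cald(A)$-valued solution of (\ref{PDE}). The delicate point is that a $C^1$-curve with values in $\cald(A)$ need not have continuous (or even locally bounded) image under $A$, so one cannot naively identify a $C^1$-parametrization with the mild solution of (\ref{PDE}). I would proceed by first normalizing a $C^1$-parametrization so that it takes values in $U$ with value $u_0$ at time $0$ (replacing $\psi$ by $\Pi_U \psi$ leaves the leaves, as well as the $C^1$- and $\cald(A)$-regularity, unchanged), then projecting (\ref{drift-tang}) at $h = \psi(t)$ onto $U$ to obtain $\psi'(t) = \Pi_U\big( A\psi(t) + \alpha(\psi(t)) \big)$, and finally testing against the separating functionals $\zeta_1, \dots, \zeta_d \in \cald(A^*)$ provided by \cite[Lemma~2.10]{Tappe-Wiener} exactly as in the proof of Theorem~\ref{thm-foliation}; comparing the finite-dimensional identities so obtained for $\langle \zeta, \psi(t) - \psi_0(t) \rangle$ with the equation solved by $\psi_0$ should give $\psi(t) - \psi_0(t) \in V$ for all $t$, hence the parametrization property. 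For the converse direction I would instead feed invariance into a variant of the $A = A^{**}$ argument from the proof of (i)$\Rightarrow$(ii) of Theorem~\ref{thm-foliation} to conclude that $\psi_0$ takes values in $\cald(A)$, so that its mild representation upgrades to a classical one.

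Finally, assume the equivalent conditions hold and let $h_0 \in \bbm$, say $h_0 \in \calm_{t_0}$ with $h_0 = \psi_0(t_0) + v_0$ for some $v_0 \in V$. By Lemma~\ref{lemma-foliations} the weak solution $r$ to (\ref{SPDE}) with $r_0 = h_0$ equals $\psi_0(t_0 + \bullet) + Y$, where $Y$ is the $V$-valued strong solution of the SDE (\ref{SDE-Y}). Since $V \subset \cald(A)$, the process $Y$ satisfies its equation with $A$ applied pointwise, and since $\psi_0$ is a classical solution of (\ref{PDE}), the curve $t \mapsto \psi_0(t_0 + t)$ is differentiable with derivative $A\psi_0(t_0 + t) + \Pi_U\alpha(\psi_0(t_0 + t))$. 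Adding the two representations, writing $A\psi_0(t_0 + s) + A Y_s = A r_s$, and using (\ref{decomp-alpha}) to replace $\Pi_U\alpha(\psi_0(t_0 + s)) + \Pi_V\alpha(r_s)$ by $\alpha(r_s)$, one arrives at $r_t = h_0 + \int_0^t (A r_s + \alpha(r_s))\, ds + \int_0^t \sigma(r_{s-})\, dX_s$ for all $t \in \bbr_+$, showing that $r$ is a strong solution.
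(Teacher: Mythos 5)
Your route is genuinely different from the paper's: the paper gives no self-contained proof of Theorem~\ref{thm-foliation-previous} (it refers to the analogous argument for \cite[Thm.~2.11]{Tappe-Wiener}), whereas you try to deduce the $C^1$ statement from the $C^0$ result, Theorem~\ref{thm-foliation}. The purely algebraic part of your reduction is correct: differencing two points of a leaf gives (\ref{V-in-domain}) from (\ref{M-in-domain}), differencing (\ref{drift-tang}) at $h$ and $h+v$ gives (\ref{decomp-alpha}), and the converse computation, once $\psi_0$ is known to be a classical $\cald(A)$-valued solution parametrizing the foliation, is exactly the Proposition following Remark~\ref{remark-strong}. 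But the analytic core, which you yourself identify as the main obstacle, is not supplied, and neither of the two mechanisms you sketch for it works as described.

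For (ii)$\Rightarrow$(i), testing against $\zeta_1,\ldots,\zeta_d$ from \cite[Lemma~2.10]{Tappe-Wiener} cannot yield $\psi(t)-\psi_0(t)\in V$: these $d$ functionals only separate points of $V$, so they determine ``$V$-coordinates'' but cannot certify membership in $V$, which is a condition of infinite codimension. The viable argument is rather to construct a $V$-valued correction $v$ with $v(0)=0$ and $v'(t)=Av(t)+\Pi_V\big(A\psi(t)+\alpha(\psi(t))\big)-\Pi_V\psi'(t)$, check that $\psi+v$ is a weak solution of (\ref{PDE}) with initial value $u_0$, and invoke uniqueness of weak solutions to conclude $\psi_0=\psi+v$; but this requires the $V$-valued function $t\mapsto A\psi(t)+\alpha(\psi(t))-\psi'(t)$ to be measurable and locally integrable, and a $C^1$ curve with values in $\cald(A)$ gives no control on $A\psi$ -- precisely the difficulty you name without resolving. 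For (i)$\Rightarrow$(ii), the step ``$\psi_0$ takes values in $\cald(A)$, so its mild representation upgrades to a classical one'' is a non sequitur: pointwise $\cald(A)$-valuedness of a weak solution does not make it classical without, e.g., continuity of $t\mapsto A\psi_0(t)$. What is actually needed is $\bbm\subset\cald(A)$ together with (\ref{drift-tang}) at every point of every leaf, and this has to be extracted from the $C^1$-regularity of the given parametrization by matching the finite-variation parts in the semimartingale identities (the analogues of (\ref{proof-foli-3a})--(\ref{proof-foli-3b}), which become differentiable in $t$ in the $C^1$ case) and then running the $A=A^{**}$ argument at each $h_0\in\bbm$; that is in substance the direct proof of \cite[Thm.~2.11]{Tappe-Wiener} which the paper invokes, and it is missing from your argument. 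Your final paragraph on strong solutions is fine, but only once $\psi_0$ is known to be a classical $\cald(A)$-valued solution, i.e.\ after these gaps are closed.
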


\begin{proof}
The proof is analogous to that of \cite[Thm.~2.11]{Tappe-Wiener}, and therefore omitted.
\end{proof}

\begin{remark}\label{remark-strong}
Suppose that the foliation $(\calm_t)_{t \in \bbr_+}$ is invariant for the SPDE (\ref{SPDE}). According to Theorems~\ref{thm-foliation} and \ref{thm-foliation-previous} the following statements are true:
\begin{itemize}
\item If $(\calm_t)_{t \in \bbr_+}$ is a $C^1$-foliation, then we have $\bbm \subset \cald(A)$, and for each $h_0 \in \bbm$ the weak solution to (\ref{SPDE}) with $r_0 = h_0$ is also a strong solution.

\item If $(\calm_t)_{t \in \bbr_+}$ is just a $C^0$-foliation, then we only have $V \subset \cald(A)$, and hence, for $h_0 \in \bbm$ the weak solution to (\ref{SPDE}) with $r_0 = h_0$ does not need to be a strong solution.
\end{itemize}
\end{remark}

The following result shows the relation between condition (\ref{decomp-alpha}) and the tangential condition (\ref{drift-tang}).

\begin{proposition}
Suppose we have (\ref{V-in-domain}) and that the PDE (\ref{PDE}) has a strong solution $\psi \in C^1(\bbr_+;H)$ with $\psi(\bbr_+) \subset \cald(A)$, which is a parametrization of the foliation $(\calm_t)_{t \in \bbr_+}$. Then the following statements are true:
\begin{enumerate}
\item We have (\ref{M-in-domain}).

\item Conditions (\ref{decomp-alpha}) and (\ref{drift-tang}) are equivalent.
\end{enumerate}
\end{proposition}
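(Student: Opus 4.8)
The plan is to handle the two assertions separately. Part (1) is immediate, and part (2) reduces to a short computation carried out modulo $V$, using the $A$-invariance of $V$ to discard an unwanted term.

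For part (1), I would observe that every $h \in \bbm$ lies in some leaf $\calm_t = \psi(t) + V$, so $h = \psi(t) + v$ with $v \in V$. By hypothesis $\psi(\bbr_+) \subset \cald(A)$, and by \eqref{V-in-domain} we have $V \subset \cald(A)$; since $\cald(A)$ is a linear subspace, $h = \psi(t) + v \in \cald(A)$. As $h \in \bbm$ was arbitrary, this gives $\bbm \subset \cald(A)$, which is \eqref{M-in-domain}.

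For part (2), the key point is that, by the PDE \eqref{PDE} and the definition of the tangent space, $T\calm_t = \psi'(t) + V = A\psi(t) + \Pi_U \alpha(\psi(t)) + V$ for every $t \in \bbr_+$. Now fix $t \in \bbr_+$ and $h \in \calm_t$, and write $h = \psi(t) + v$ with $v \in V$. By \eqref{V-in-domain} we have $Av \in V$, hence
\[
Ah + \alpha(h) \equiv A\psi(t) + \alpha(h) \pmod{V}.
\]
Consequently $Ah + \alpha(h) \in T\calm_t$ if and only if $\alpha(h) - \Pi_U\alpha(\psi(t)) \in V$; and since $\Pi_V\alpha(h) \in V$ while $\Pi_U\alpha(h) - \Pi_U\alpha(\psi(t)) \in U$ and $U \cap V = \{0\}$, this is equivalent to $\Pi_U\alpha(h) = \Pi_U\alpha(\psi(t))$. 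Thus \eqref{drift-tang} holds along $\calm_t$ exactly when $\Pi_U\alpha(h) = \Pi_U\alpha(\psi(t))$ for all $h \in \calm_t$; since $\psi(t) \in \calm_t$, this says precisely that $\Pi_U\alpha$ is constant on $\calm_t$ (with value $\Pi_U\alpha(\psi(t))$), i.e. \eqref{decomp-alpha}. Quantifying over all $t \in \bbr_+$ yields the asserted equivalence of \eqref{decomp-alpha} and \eqref{drift-tang}.

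I do not expect a genuine obstacle here. The only place requiring care is the bookkeeping with the projections $\Pi_U$ and $\Pi_V$ — in particular the step reducing ``$\alpha(h) - \Pi_U\alpha(\psi(t)) \in V$'' to ``$\Pi_U\alpha(h) = \Pi_U\alpha(\psi(t))$'' via $U \cap V = \{0\}$ — together with the observation that it is exactly the $A$-invariance of $V$ (not merely $V \subset \cald(A)$) that lets one replace $Ah$ by $A\psi(t)$ modulo $V$.
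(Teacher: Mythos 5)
Your proposal is correct and follows essentially the same route as the paper: part (1) from linearity of $\cald(A)$, and part (2) by substituting $\psi'(t)=A\psi(t)+\Pi_U\alpha(\psi(t))$ from the PDE, using $Av\in V$, and splitting off the $U$-component $\Pi_U\alpha(h)-\Pi_U\alpha(\psi(t))$, which must vanish since $U\cap V=\{0\}$. The only cosmetic difference is that you argue modulo $V$ while the paper writes out the explicit decomposition; the content is identical.
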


\begin{proof}
The first statement follows from (\ref{V-in-domain}) and the relation $\psi(\bbr_+) \subset \cald(A)$. For the proof of the second statement, let $t \in \bbr_+$ and $v \in V$ be arbitrary, and set $h := \psi(t) + v \in \calm_t$. By the PDE (\ref{PDE}) and condition (\ref{V-in-domain}) we obtain
\begin{align*}
Ah + \alpha(h) &= A \psi(t) + A v + \Pi_U \alpha(\psi(t) + v) + \Pi_V \alpha(\psi(t) + v)
\\ &= \frac{d}{dt} \psi(t) - \Pi_U \alpha(\psi(t)) + A v + \Pi_U \alpha(\psi(t) + v) + \Pi_V \alpha(\psi(t) + v)
\\ &= \underbrace{\frac{d}{dt} \psi(t) + A v + \Pi_V \alpha(\psi(t) + v)}_{\in T \calm_t} + \big( \Pi_U \alpha(\psi(t) + v) - \Pi_U \alpha(\psi(t)) \big),
\end{align*}
showing that conditions (\ref{decomp-alpha}) and (\ref{drift-tang}) are equivalent.
\end{proof}

In order to exemplify our previous results, consider the abstract Cauchy problem
\begin{align}\label{Cauchy}
\left\{
\begin{array}{rcl}
dr_t & = & A r_t dt 
\medskip
\\ r_0 & = & h_0.
\end{array}
\right.
\end{align}
Fix an arbitrary $h_0 \in H$ and let the foliation $(\calm_t)_{t \in \bbr_+}$ be given by $\calm_t := \{ S_t h_0 \}$. According to Theorem~\ref{thm-foliation}, the foliation $(\calm_t)_{t \in \bbr_+}$ is invariant for the abstract Cauchy problem (\ref{Cauchy}), and we can remark the following points:
\begin{itemize}
\item If $h_0 \in \cald(A)$, then $(\calm_t)_{t \in \bbr_+}$ is a $C^1$-foliation, and hence $\bbm \subset \cald(A)$.

\item If $A$ is the generator of a differentiable semigroup $(S_t)_{t \geq 0}$, then the mapping $t \mapsto S_t h_0$ is continuously differentiable on $(0,\infty)$ and we have $\calm_t \subset \cald(A)$ for all $t > 0$.
\end{itemize}
Finally, we present an example showing that the situation $\bbm \cap \cald(A) = \emptyset$ can occur. For this purpose, we choose the space of forward curves from \cite[Sec. 5]{fillnm}, which we will use later in Section \ref{sec-HJMM}. Let $H$ be the space of all absolutely continuous functions $h : \mathbb{R}_+ \rightarrow \mathbb{R}$ such that
\begin{align*}
\| h \| := \bigg( |h(0)|^2 + \int_{\mathbb{R}_+} |h'(x)|^2
w(x) dx \bigg)^{1/2} < \infty
\end{align*}
for some nondecreasing $C^1$-function $w : \bbr_+ \to [1,\infty)$ such that $w^{-1/3} \in \mathcal{L}^1(\bbr_+)$. Then the translation semigroup $(S_t)_{t \geq 0}$ is a $C_0$-semigroup on $H$ with generator $d/dx$ on the domain
\begin{align*}
\cald(d/dx) = \{ h \in C^1(\bbr_+) \cap H : h' \in H \}.
\end{align*}

\begin{example}\label{ex-HJM}
Let $h_0 : \bbr_+ \to \bbr$ be the unique absolutely continuous function with weak derivative
\begin{align*}
h_0' = \sum_{n \in \bbn_0} \mathbbm{1}_{[n,n+2^{-n} w(n)^{-1}]}.
\end{align*}
Then we have $h_0 \in H$, because $\| h_0 \| < \infty$, but for each $t \in \bbr_+$ we have $S_t h_0 \notin \cald(d/dx)$, because $S_t h_0 \notin C^1(\bbr_+)$, showing that $\bbm \cap \cald(d/dx) = \emptyset$. 
\end{example}

\section{Existence of affine realizations}\label{sec-realizations}

In this section, we provide the proofs of our main results concerning the existence of affine realizations. The general mathematical framework is that of Section \ref{sec-SPDEs}. We start with the formal definition of an affine realization.

\begin{definition}\mbox{}
\begin{enumerate}
\item Let $V \subset H$ be a finite dimensional subspace. We say that the SPDE (\ref{SPDE}) has an \emph{affine realization generated by $V$} if for all $h_0 \in H$ there is an invariant foliation $(\calm_t)_{t \in \bbr_+}$ generated by $V$ such that $h_0 \in \calm_0$.

\item We say that the SPDE (\ref{SPDE}) has an \emph{affine realization} if it has an affine realization generated by some finite dimensional subspace $V \subset H$.
\end{enumerate}
\end{definition}

With our preparations from Section \ref{sec-foliations}, we are now ready to provide the proofs of Theorems~\ref{thm-intro} and \ref{thm-linear}.

\begin{proof}[Proof of Theorem~\ref{thm-intro}]
If the SPDE (\ref{SPDE}) has an affine realization, then conditions (1)--(3) follow from Theorem~\ref{thm-foliation}.

Conversely, suppose that conditions (1)--(3) are fulfilled. Let $H = U \oplus V$ be a direct sum decomposition of the Hilbert space $H$ with a closed subspace $U$. Furthermore, let $h_0 \in H$ be arbitrary, and let $h_0 = u_0 + v_0$ be its decomposition according to $H = U \oplus V$.
Let $\psi$ be the weak solution to the PDE (\ref{PDE}), and let $(\calm_t)_{t \in \bbr_+}$ be the foliation $\mathcal{M}_t := \psi(t) + V$. Then we have $h_0 \in \calm_0$, and by Theorem~\ref{thm-foliation} the foliation $(\calm_t)_{t \in \bbr_+}$ is invariant for (\ref{SPDE-B}).
\end{proof}

\begin{proof}[Proof of Theorem~\ref{thm-linear}]
If conditions (1) and (2) are fulfilled, then, according to Theorem~\ref{thm-intro}, the linear SPDE (\ref{SPDE}) has an affine realization.

Conversely, suppose that the linear SPDE (\ref{SPDE}) has an affine realization. By Lemmas~\ref{lemma-transformation} and \ref{lemma-equiv} there exists a linear operator $T \in L(H)$ such that with the linear operator $B : \cald(B) \subset H \to H$ given by $\cald(B) := \cald(A)$ and $B := A + T$, and the mapping $\beta : H \to H$ given by $\beta := \alpha - T$, the following conditions are fulfilled:
\begin{itemize}
\item $V$ is $B$-semi-invariant.

\item $B$ is the generator of a $C_0$-semigroup on $H$.

\item $\beta$ is Lipschitz continuous.

\item The SPDEs (\ref{SPDE}) and (\ref{SPDE-B}) are equivalent. 
\end{itemize}
Let $H = U \oplus V$ be a direct sum decomposition of the Hilbert space $H$ with a closed subspace $U$. According to Theorem~\ref{thm-intro}, the subspace $V$ is $B$-invariant, we have that $\Pi_U \beta$ is constant on $V$, and we have $\sigma^k(H) \subset V$ for all $k=1,\ldots,m$. Noting that $\Pi_U \beta = \Pi_U \alpha - \Pi_U T$, and that $\alpha \in H$ is constant, we deduce that $\Pi_U T$ is constant on $V$, which implies $V \subset \ker(\Pi_U T)$. Therefore, the subspace $V$ is $T$-invariant, and hence it is also $A$-invariant.
\end{proof}

\begin{remark}\label{remark-construction}
Suppose that the SPDE (\ref{SPDE}) has an affine realization generated by some finite dimensional subspace $V$. 
\begin{itemize}
\item We can construct the curve $\psi$ and the $V$-valued process $Y$ appearing in (\ref{decomp-intro}) as follows. We fix a direct sum decomposition $H = U \oplus V$, and decompose an arbitrary starting point $h_0 \in H$ as $h_0 = u_0 + v_0$ according to $H = U \oplus V$. Inspecting the proofs of Theorems~\ref{thm-intro} and \ref{thm-foliation}, we see that $\psi : \bbr_+ \to H$ is the weak solution to the $H$-valued PDE (\ref{PDE}) and and that $Y$ is the strong solution to the $V$-valued SDE (\ref{SDE-Y}) with $t_0 = 0$. 

\item If $\alpha(H) \subset V$ (as in the situation of Corollary~\ref{cor-intro}), then the curve $\psi$ appearing in (\ref{decomp-intro}) is given by $\psi(t) = S_t h_0$ for $t \in \bbr_+$.

\item In any case, we can decompose the weak solution to the $H$-valued SPDE (\ref{SPDE}) into the weak solution to the $H$-valued PDE (\ref{PDE}) and the strong solution to the $V$-valued SDE (\ref{SDE-Y}). 

\item Even for $h_0 \in \cald(A)$ the invariant foliation is generally only a $C^0$-foliation, and hence, due to Remark~\ref{remark-strong}, the weak solution to the SPDE (\ref{SPDE}) is generally not a strong solution. 

\item If $\Pi_U \alpha(\cald(A)) \subset \cald(A)$ and $\Pi_U \alpha$ is Lipschitz continuous on $\cald(A)$ with respect to the graph norm
\begin{align*}
\| h \|_{\cald(A)} = \sqrt{ \| h \|^2 + \| Ah \|^2 }, \quad h \in \cald(A)
\end{align*}
(as, for example, in the situation of Corollary~\ref{cor-intro}), then, according to \cite[Thm.~6.1.7]{Pazy}, for each starting point $h_0 \in \cald(A)$ the PDE (\ref{PDE}) admits a classical solution, which implies that the invariant foliation is a $C^1$-foliation and that the weak solution to the SPDE (\ref{SPDE}) is also a strong solution.
\end{itemize}
\end{remark}

Finally, we will derive a result concerning the existence of affine realizations for linear SPDEs without specifying a finite dimensional subspace in advance. For this purpose, we require the concept of quasi-exponential volatilities.

\begin{definition}\label{def-qe}
We introduce the following notions:
\begin{enumerate}
\item If $\sigma^k(H) \subset \cald(A^{\infty})$ for all $k=1,\ldots,m$, then we define the subspace $A_{\sigma} \subset H$ as
\begin{align*}
A_{\sigma} := \sum_{k=1}^m \langle A^n \sigma^k(h) : n \in \bbn_0 \text{ and } h \in H \rangle.
\end{align*}
\item The volatility $\sigma$ is called \emph{$A$-quasi-exponential}, if we have $\sigma^k(H) \subset \cald(A^{\infty})$ for all $k=1,\ldots,m$ and $\dim A_{\sigma} < \infty$.
\end{enumerate}
\end{definition}

The following two auxiliary results are immediate consequences of Definition~\ref{def-qe}.

\begin{lemma}\label{lemma-sigma-qe}
Let $V$ be a finite dimensional $A$-invariant subspace such that $\sigma^k(H) \subset V$ for all $k=1,\ldots,m$. Then the volatility $\sigma$ is $A$-quasi-exponential.
\end{lemma}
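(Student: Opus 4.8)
The plan is to verify both conditions in the definition of $A$-quasi-exponentiality directly from the hypotheses. First I would establish that $\sigma^k(H) \subset \cald(A^\infty)$ for each $k = 1,\ldots,m$. Since $V$ is $A$-invariant, by definition $V \subset \cald(A)$ and $A(V) \subset V$. An easy induction on $n$ then shows $V \subset \cald(A^n)$ and $A^n(V) \subset V$ for every $n \in \bbn_0$: the base case is trivial, and if $V \subset \cald(A^n)$ with $A^n(V) \subset V$, then $A^n(V) \subset V \subset \cald(A)$, so $V \subset \cald(A^{n+1})$, and $A^{n+1}(V) = A(A^n(V)) \subset A(V) \subset V$. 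Hence $V \subset \cald(A^\infty) = \bigcap_{n \geq 0} \cald(A^n)$, and since $\sigma^k(H) \subset V$ by hypothesis, we get $\sigma^k(H) \subset \cald(A^\infty)$.

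Next I would show $\dim A_\sigma < \infty$. By the previous step, for every $k$, every $h \in H$ and every $n \in \bbn_0$ we have $A^n \sigma^k(h) \in A^n(V) \subset V$. Therefore the generating set $\{ A^n \sigma^k(h) : n \in \bbn_0,\ h \in H,\ k = 1,\ldots,m\}$ is contained in $V$, so $A_\sigma \subset V$ (as $A_\sigma$ is the linear span of that set and $V$ is a subspace). Since $V$ is finite dimensional, $\dim A_\sigma \leq \dim V < \infty$. Both defining conditions of Definition~\ref{def-qe}(2) are thus satisfied, so $\sigma$ is $A$-quasi-exponential.

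There is no real obstacle here; the only point requiring a moment's care is the induction establishing $A^n(V) \subset V \subset \cald(A^{n+1})$, which is where $A$-invariance (as opposed to mere $A$-semi-invariance) is used, and the observation that $A_\sigma$, being a span of vectors lying in the subspace $V$, is itself contained in $V$. I would write this up in a few lines, stating the induction explicitly and then concluding both conditions of the definition.
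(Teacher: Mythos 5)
Your proof is correct, and it is precisely the argument the paper has in mind: the paper states this lemma without proof as an ``immediate consequence'' of Definition~\ref{def-qe}, and your induction showing $A^n(V)\subset V\subset\cald(A^n)$, hence $\sigma^k(H)\subset\cald(A^\infty)$ and $A_\sigma\subset V$, is exactly the routine verification being omitted.
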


\begin{lemma}\label{lemma-A-sigma-invariant}
Suppose that the volatility $\sigma$ is $A$-quasi-exponential, and set $V := A_{\sigma}$. Then $V$ is a finite dimensional $A$-invariant subspace, and we have $\sigma^k(H) \subset V$ for all $k=1,\ldots,m$.
\end{lemma}

Now, we are ready to formulate and prove the announced result.

\begin{theorem}\label{thm-qe}
Suppose that the SPDE (\ref{SPDE}) is linear. Then it has an affine realization if and only if the volatility $\sigma$ is $A$-quasi-exponential.
\end{theorem}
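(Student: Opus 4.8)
The plan is to prove both implications by reducing to Theorem~\ref{thm-linear} together with the two auxiliary lemmas (Lemma~\ref{lemma-sigma-qe} and Lemma~\ref{lemma-A-sigma-invariant}), which package exactly the translation between ``$A$-quasi-exponential volatility'' and ``finite dimensional $A$-invariant subspace containing the ranges of the $\sigma^k$''.

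First I would treat the ``if'' direction. Suppose $\sigma$ is $A$-quasi-exponential. By Lemma~\ref{lemma-A-sigma-invariant}, the subspace $V := A_\sigma$ is finite dimensional and $A$-invariant, and satisfies $\sigma^k(H) \subset V$ for all $k = 1,\ldots,m$. Thus conditions (1) and (2) of Theorem~\ref{thm-linear} hold for this $V$, so the linear SPDE (\ref{SPDE}) has an affine realization generated by $V$; in particular it has an affine realization.

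Conversely, suppose the linear SPDE (\ref{SPDE}) has an affine realization. By definition this means there is a finite dimensional subspace $V \subset H$ such that the SPDE has an affine realization generated by $V$. Applying Theorem~\ref{thm-linear} in the ``only if'' direction, we conclude that $V$ is $A$-invariant and that $\sigma^k(H) \subset V$ for all $k = 1,\ldots,m$. Now Lemma~\ref{lemma-sigma-qe}, applied to this $V$, yields that $\sigma$ is $A$-quasi-exponential, completing the proof.

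I do not expect a serious obstacle here: the real content has already been established in Theorem~\ref{thm-linear} and in the two lemmas, and this theorem is essentially a repackaging that removes the a priori choice of $V$. The one point to keep in mind is that Theorem~\ref{thm-linear} is stated \emph{without} the standing assumption of $A$-semi-invariance of $V$ (as emphasized in the text preceding it), so no reduction via Lemmas~\ref{lemma-transformation} and \ref{lemma-equiv} is needed in either direction; one simply invokes Theorem~\ref{thm-linear} directly for whatever $V$ is at hand.
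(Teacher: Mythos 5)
Your proposal is correct and matches the paper's own proof essentially verbatim: both directions are handled by combining Theorem~\ref{thm-linear} with Lemmas~\ref{lemma-sigma-qe} and \ref{lemma-A-sigma-invariant}, taking $V := A_\sigma$ for the ``if'' part. Your remark that Theorem~\ref{thm-linear} needs no $A$-semi-invariance assumption, so no reduction via Lemmas~\ref{lemma-transformation} and \ref{lemma-equiv} is required, is also accurate.
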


\begin{proof}
Suppose that the linear SPDE (\ref{SPDE}) has an affine realization. By Theorem~\ref{thm-linear} there exists a finite dimensional subspace $V \subset H$ such that $V$ is $A$-invariant and $\sigma^k(H) \subset V$ for all $k=1,\ldots,m$. According to Lemma~\ref{lemma-sigma-qe}, the volatility $\sigma$ is $A$-quasi-exponential.

Conversely, Suppose that the volatility $\sigma$ is $A$-quasi-exponential, and set $V := A_{\sigma}$. By Lemma~\ref{lemma-A-sigma-invariant}, the subspace $V$ is a finite dimensional $A$-invariant subspace, and we have $\sigma^k(H) \subset V$ for all $k=1,\ldots,m$. Therefore, by Theorem~\ref{thm-linear} the linear SPDE (\ref{SPDE}) has an affine realization.
\end{proof}

\section{The HJMM equation}\label{sec-HJMM}

In the section, we treat the HJMM equation as an example of a nonlinear SPDE. More precisely, we consider the SPDE
\begin{align}\label{HJMM-Wiener}
\left\{
\begin{array}{rcl}
dr_t & = & \big( \frac{d}{dx} r_t + \alpha_{\rm HJM}(r_t) \big) dt +
\sigma(r_{t})dW_t
\medskip
\\ r_0 & = & h_0
\end{array}
\right.
\end{align}
driven by a $\bbr^m$-valued Wiener processes $W$. The state space $H$ of (\ref{HJMM-Wiener}) is the space used in Example \ref{ex-HJM}. The weak solutions $r$ to (\ref{HJMM-Wiener}) are interest rate curves in a market of zero coupon bonds. In order to ensure that this bond market is free of arbitrage, we assume that the drift term in (\ref{HJMM-Wiener}) is given by the HJM drift condition
\begin{align}\label{HJM-drift-Wiener}
\alpha_{\rm HJM}(h) = \sum_{k=1}^m \sigma^k(h) \cdot T \sigma^k(h),
\end{align}
where $T : H \to H$ denotes the integral operator given by $T h := \int_0^{\bullet} h(\eta) d\eta$ for $h \in H$. We refer, e.g., to \cite{fillnm} for further details concerning the derivation of the HJMM equation (\ref{HJMM-Wiener}) and the HJM drift condition (\ref{HJM-drift-Wiener}).

Our goal of this section is to provide an alternative and rather short proof of a well-known result concerning the existence of FDRs for the HJMM equation (\ref{HJMM-Wiener}), which can, e.g., be found in \cite{Bj_Sv}, \cite{Bj_La} or \cite{Tappe-Wiener}. For this purpose, we start with an auxiliary result.

\begin{lemma}\label{lemma-product}
Let $V$ be a finite dimensional $(d/dx)$-invariant subspace. Then the subspace $V + P(V)$, where
\begin{align}\label{product-space}
P(V) := \langle h \cdot g : h \in V \text{ and } g \in T V \rangle,
\end{align}
is finite dimensional and $(d/dx)$-invariant, too.
\end{lemma}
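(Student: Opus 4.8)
\emph{Proof strategy.} Write $W := V + P(V)$. I would prove the two assertions — that $W$ is finite-dimensional and that it is $(d/dx)$-invariant — separately; the first is soft, the second carries all the content.

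For finite-dimensionality: $T$ is linear, so $TV$ is finite-dimensional with $\dim TV\le\dim V$, and if $h_1,\dots,h_p$ is a basis of $V$ and $g_1,\dots,g_q$ a basis of $TV$, then the finitely many products $h_ig_j$ span $P(V)$; hence $\dim W<\infty$. That $W$ is a subspace of $H$ contained in $\cald(d/dx)$ is seen exactly as in the derivation of the HJM drift condition (\ref{HJM-drift-Wiener}): $V\subseteq\cald(d/dx)$ because $V$ is $(d/dx)$-invariant, every $g\in TV$ has the form $g=Tf$ with $f\in V$ and is therefore $C^{1}$ with $g'=f$, and the products $h\cdot g$ as well as their derivatives lie in $H$ by the estimates on the weighted space $H$ from \cite{fillnm}.

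For the $(d/dx)$-invariance I would first reduce to a single algebraic inclusion. By the Leibniz rule, a generator $h\cdot Tf$ of $P(V)$ (with $h,f\in V$) satisfies $(h\cdot Tf)' = h'\cdot Tf + h\cdot f$. Since $V$ is $(d/dx)$-invariant, $h'\in V$, so $h'\cdot Tf\in P(V)$; together with $(d/dx)V\subseteq V$ this shows that $(d/dx)W\subseteq W$ will follow once we know $V\cdot V\subseteq V+P(V)$. To prove the latter, I would invoke the classical description of finite-dimensional $(d/dx)$-invariant function spaces: writing $D:=(d/dx)|_V$ and passing over $\bbc$ to generalized eigenspaces, $V=\bigoplus_{\lambda}V_\lambda$ with $V_\lambda=\{\,p(x)e^{\lambda x}:\deg p\le m_\lambda\,\}$ for suitable $m_\lambda\in\bbn_0$ — an invariant subspace on which $D-\lambda$ is nilpotent cannot have gaps in the admissible polynomial degrees. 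It then suffices to show $x^{a}e^{\lambda x}\cdot x^{b}e^{\mu x}\in W$ for $a\le m_\lambda$, $b\le m_\mu$. When $\lambda,\mu\ne 0$, induction on $a$ does it: since $T(x^{a}e^{\lambda x})$ equals $\tfrac1\lambda x^{a}e^{\lambda x}$ plus a constant plus an element of $V_\lambda$ of degree $<a$, the product $x^{b}e^{\mu x}\cdot T(x^{a}e^{\lambda x})\in P(V)$ recovers $x^{a+b}e^{(\lambda+\mu)x}$ modulo $V$ and modulo lower-order products already covered by the induction hypothesis; the base case $a=0$ uses $T(e^{\lambda x})=\tfrac1\lambda(e^{\lambda x}-1)$. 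The cases with a vanishing exponent are similar — integrate the remaining exponential factor if there is one; if both exponents vanish, use that $x^{n}=(\ell+1)\,x^{m_0}\,T(x^{\ell})\in P(V)$ with $\ell:=n-m_0-1$ for $m_0<n\le 2m_0+1$, so that $V_0\cdot V_0\subseteq V_0+P(V)$.

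The step I expect to be the main obstacle is precisely $V\cdot V\subseteq V+P(V)$. Purely formal manipulations are deceptive here: integration by parts gives $T(vw)=v\cdot Tw-T(v'\cdot Tw)$, but differentiating this back is circular, and the Leibniz identity only moves the problem onto $h\cdot f\in V\cdot V$. One genuinely has to exploit that differentiation-invariant finite-dimensional function spaces consist of quasi-exponentials and then run the degree induction, and getting the degree bookkeeping right — including the pure-polynomial case — is the technical heart of the argument. The auxiliary facts that $P(V)\subseteq H$ and $W\subseteq\cald(d/dx)$ are, by contrast, of the same routine nature as the well-definedness of the HJM drift (\ref{HJM-drift-Wiener}).
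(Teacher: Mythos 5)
Your proof is correct, but it takes a much longer route than the paper, and your diagnosis of where the difficulty lies is off. The paper handles the term $h\cdot f$ produced by the Leibniz rule from $\frac{d}{dx}(h\cdot Tf)=h'\cdot Tf+h\cdot f$ by precisely the kind of formal manipulation you dismiss as deceptive: since $V$ is $(d/dx)$-invariant we have $f'\in V$, and the fundamental theorem of calculus gives $f=Tf'+f(0)\cdot 1$, i.e. $\frac{d}{dx}(Tf)\in TV+\langle 1\rangle$; hence $h\cdot f=h\cdot Tf'+f(0)\,h\in P(V)+V$, and the $(d/dx)$-invariance of $V+P(V)$ follows at once. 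So the inclusion $V\cdot V\subset V+P(V)$, which you single out as the technical heart and attack through the classification of finite dimensional $(d/dx)$-invariant spaces as quasi-exponential spaces plus a degree induction, actually has a one-line proof that uses nothing beyond the invariance of $V$ and the relation $\frac{d}{dx}Tf=f$; no spectral decomposition $V=\bigoplus_\lambda V_\lambda$, no no-gaps lemma, no case distinction in $\lambda,\mu$, and no separate pure-polynomial case are needed. Your route does go through: the no-gaps claim follows by repeatedly differentiating an element of maximal degree, your bookkeeping with $T(x^ae^{\lambda x})=\lambda^{-1}x^ae^{\lambda x}+(\text{lower order in }V_\lambda)+\text{const}$ is sound, and complexifying and returning to real parts is harmless; it also has the mild side benefit of making the quasi-exponential structure of $V$ explicit, which reappears anyway in Proposition~\ref{prop-HJMM-linear}. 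But it imports structure theory of the invariant subspace where the paper's argument only needs integration by parts, and the routine embedding facts ($P(V)\subset H$, $V\subset\cald(d/dx)$) are treated as background knowledge in both versions, so they do not tip the balance. In short: correct, but the induction machinery is unnecessary, and the lesson of the paper's proof is that the "circular-looking" identity you set aside is exactly the right one once you rewrite $f$ itself as $Tf'+f(0)$.
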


\begin{proof}
The subspace $V + P(V)$ is finite dimensional, because we have
\begin{align*}
\dim (V + P(V)) \leq \dim V + (\dim V)^2 < \infty.
\end{align*}
Let $h \in V$ and $g \in T V$ be arbitrary. Then there exists $f \in V$ such that $g = Tf$. Since $V$ is $(d/dx)$-invariant, we obtain $f' \in V$, and hence
\begin{align*}
\frac{d}{dx} g = \frac{d}{dx} \int_0^{\bullet} f(y) dy = f = \big( f - f(0) \big) + f(0) = \int_0^{\bullet} f'(y) dy + f(0) \in T V + \langle 1 \rangle.
\end{align*}
Therefore, and since $V$ is $(d/dx)$-invariant, we deduce
\begin{align*}
\frac{d}{dx} \big( h \cdot g \big) = \frac{d}{dx} h \cdot g + h \cdot \frac{d}{dx} g \in V + P(V),
\end{align*}
showing that $V + P(V)$ is $(d/dx)$-invariant.
\end{proof}

\begin{proposition}\label{prop-HJMM-Wiener}
Suppose that the volatility $\sigma$ is $(d/dx)$-quasi-exponential. Then the HJMM equation (\ref{HJMM-Wiener}) has an affine realization.
\end{proposition}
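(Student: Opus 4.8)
The plan is to reduce the nonlinear HJMM equation to the setting of Theorem~\ref{thm-intro} (or rather Corollary~\ref{cor-intro}) by exhibiting a finite dimensional subspace $V$ that is $(d/dx)$-invariant, contains the ranges of all $\sigma^k$, and contains the range of the HJM drift $\alpha_{\rm HJM}$. Since $\sigma$ is $(d/dx)$-quasi-exponential, Lemma~\ref{lemma-A-sigma-invariant} gives us the finite dimensional $(d/dx)$-invariant subspace $V_0 := A_\sigma$ with $\sigma^k(H) \subset V_0$ for all $k=1,\ldots,m$. The natural candidate is then $V := V_0 + P(V_0)$, where $P(V_0)$ is the product space from (\ref{product-space}). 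By Lemma~\ref{lemma-product}, $V$ is finite dimensional and $(d/dx)$-invariant. It remains to check the two range inclusions.

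First, $\sigma^k(H) \subset V_0 \subset V$ is immediate. Second, I must verify $\alpha_{\rm HJM}(H) \subset V$. By the HJM drift condition (\ref{HJM-drift-Wiener}) we have, for each $h \in H$,
\begin{align*}
\alpha_{\rm HJM}(h) = \sum_{k=1}^m \sigma^k(h) \cdot T \sigma^k(h).
\end{align*}
For each $k$, $\sigma^k(h) \in V_0$ and $T\sigma^k(h) \in T V_0$, so by definition $\sigma^k(h) \cdot T\sigma^k(h) \in P(V_0) \subset V$; summing over $k$ keeps us in $V$. Hence $\alpha_{\rm HJM}(H) \subset V$. Now all three hypotheses of Corollary~\ref{cor-intro} are satisfied with this $V$ and $A = d/dx$: $V$ is $(d/dx)$-invariant (so in particular $A$-invariant), $\alpha_{\rm HJM}(H) \subset V$, and $\sigma^k(H) \subset V$ for all $k$. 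Therefore the HJMM equation (\ref{HJMM-Wiener}) has an affine realization generated by $V$.

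The only real technical point to keep an eye on is that Corollary~\ref{cor-intro} (via Theorem~\ref{thm-intro}) is stated for SPDEs driven by a L\'evy process whose components are nontrivial square-integrable martingales, whereas (\ref{HJMM-Wiener}) is driven by a Wiener process; this is of course a special case, so no difficulty arises, but it should be noted explicitly. One should also recall, as in the general framework of Section~\ref{sec-SPDEs}, that the translation semigroup on the forward-curve space $H$ is a pseudo-contractive $C_0$-semigroup with generator $d/dx$ (this was recorded just before Example~\ref{ex-HJM}), and that $\alpha_{\rm HJM}$ and $\sigma$ are assumed Lipschitz continuous, so that the standing assumptions of Section~\ref{sec-SPDEs} are in force. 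With these remarks the proof is complete.
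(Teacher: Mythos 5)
Your proof is correct and follows essentially the same route as the paper: setting $V := A_{\sigma} + P(A_{\sigma})$, invoking Lemmas~\ref{lemma-A-sigma-invariant} and \ref{lemma-product} for finite dimensionality and $(d/dx)$-invariance, checking the range inclusions for $\sigma^k$ and $\alpha_{\rm HJM}$ via the drift condition (\ref{HJM-drift-Wiener}), and concluding with Corollary~\ref{cor-intro}. The additional remarks on the Wiener process being a special L\'evy process and on the standing assumptions of Section~\ref{sec-SPDEs} are harmless elaborations, not a different argument.
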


\begin{proof}
For simplicity of notation, we set $A := d/dx$.
By Lemma~\ref{lemma-A-sigma-invariant}, the subspace $A_{\sigma}$ is a finite dimensional $A$-invariant subspace, and we have $\sigma^k(H) \subset A_{\sigma}$ for all $k=1,\ldots,m$. By Lemma~\ref{lemma-product}, the subspace $V := A_{\sigma} + P(A_{\sigma})$ is finite dimensional and $A$-invariant, too. Moreover, we have $\sigma^k(H) \subset A_{\sigma} \subset V$ for all $k=1,\ldots,m$, and by (\ref{HJM-drift-Wiener}) and (\ref{product-space}) we have $\alpha_{\rm HJM}(H) \subset V$. Therefore, Corollary~\ref{cor-intro} concludes the proof.
\end{proof}

\begin{remark}
Suppose that the volatility $\sigma$ is $(d/dx)$-quasi-exponential.
\begin{itemize}
\item Note that the just presented result is more general than \cite[Prop.~6.2]{Tappe-Wiener}, because here we obtain a representation of the form (\ref{decomp-intro}) for every starting point $h_0 \in H$, whereas the aforementioned result only provides such a representation for each starting point $h_0 \in \cald(d/dx)$.

\item For each $h_0 \in H$ the curve $\psi$ appearing in (\ref{decomp-intro}) is given by $\psi(t) = S_t h_0$ for $t \in \bbr_+$, which follows from Remark~\ref{remark-construction}. Furthermore, for each $h_0 \in \cald(d/dx)$ the invariant foliation is a $C^1$-foliation and the weak solution to the HJMM equation (\ref{HJMM-Wiener}) is also a strong solution.

\item If we add driving L\'{e}vy processes with jumps in the HJMM equation (\ref{HJMM-Wiener}), then the statement of Proposition~\ref{prop-HJMM-Wiener} is no longer true, because the drift condition becomes more involved. We refer to \cite{Tappe-Levy} for details on this subject.
\end{itemize}
\end{remark}

\section{Examples of linear SPDEs}\label{sec-examples}

In this section, we present several examples of linear SPDEs arising in natural sciences and economics. Our approach in these example is to determine all finite dimensional invariant subspaces, and to apply Theorem~\ref{thm-linear} afterwards. For this procedure, we determine all eigenvalues $\lambda$ of the generator $A$, and then we distinguish two cases:
\begin{itemize}
\item For a general operator $A$, we determine all solutions of the generalized eigenvalue problem. More precisely, let $\lambda \in \bbc$ be an eigenvalue of $A$ and let $n \in \bbn$ be arbitrary. If $\lambda \in \bbr$, then we determine all solutions of the generalized eigenvalue problem
\begin{align}\label{EV-1a}
(A - \lambda)^n = 0,
\end{align}
and in the case $\lambda \in \bbc \setminus \bbr$ we determine all solutions of the generalized eigenvalue problem
\begin{align}\label{EV-1b}
((A - \lambda)(A - \overline{\lambda}))^n = 0.
\end{align}
\item If $A$ is symmetric\footnote[2]{For our purposes, we do not need that the operator $A$ is self-adjoint, because we merely consider its restrictions on finite dimensional subspaces of $H$.}, then every eigenvalue is real, and for an eigenvalue $\lambda \in \bbr$ it suffices to determine all solutions of the eigenvalue problem
\begin{align}\label{EV-2}
A - \lambda = 0.
\end{align}
\end{itemize}
Our general mathematical framework in this section is that of Section~\ref{sec-SPDEs}; in particular, throughout this section, the driving process $X$ denotes a $\bbr^m$-valued L\'{e}vy process for some positive integer $m \in \bbn$.

First, we deal with the HJMM equation, which we have already encountered in Section \ref{sec-HJMM}. Here we consider the linear HJMM equation
\begin{align}\label{HJMM}
\left\{
\begin{array}{rcl}
dr_t & = & \big( \frac{d}{dx} r_t + \alpha_{\rm HJM} \big) dt +
\sigma dX_t
\medskip
\\ r_0 & = & h_0.
\end{array}
\right.
\end{align}
In order to be consistent with the upcoming examples, we consider (\ref{HJMM}) on the state space $L^2(\bbr_+,\rho)$ for some appropriate measure $\rho$. Moreover, in order to ensure the absence of arbitrage, we assume that the drift term is given by
\begin{align*}
\alpha_{\rm HJM} = \frac{d}{dx} \Psi(- T \sigma),
\end{align*}
where $\Psi$ denotes the cumulant generating function of the L\'{e}vy process $X$. We refer, e.g., to \cite[Sec.~2.1]{Eberlein_O} for further details.

\begin{proposition}\label{prop-HJMM-linear}
The following statements are equivalent:
\begin{enumerate}
\item[(i)] The linear HJMM equation (\ref{HJMM}) has an affine realization.

\item[(ii)] There are finite sets $I \subset \bbr$, $J \subset \bbr \times (0,\infty)$, and an integer $p \in \bbn_0$ such that
\begin{equation}\label{sin-cos}
\begin{aligned}
\sigma^k &\in \bigoplus_{\lambda \in I} \langle x \mapsto x^j \exp(\lambda x) : j = 0,\ldots,p \rangle 
\\ &\quad \oplus \bigoplus_{(\mu,\nu) \in J} \langle x \mapsto x^j \exp(\mu x) \cos(\nu x),
\\ &\qquad\qquad\quad\,\,\, x \mapsto x^j \exp(\mu x) \sin(\nu x) : j = 0,\ldots,p \rangle
\end{aligned}
\end{equation}
for all $k=1,\ldots,m$.
\end{enumerate}
\end{proposition}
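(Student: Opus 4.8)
The plan is to reduce the statement to Theorem~\ref{thm-qe}, which already characterizes the existence of an affine realization for a linear SPDE, and then to invoke the classical description of finite dimensional $(d/dx)$-invariant spaces of functions as solution spaces of linear ordinary differential equations with constant coefficients. Since (\ref{HJMM}) is linear with constant volatility $\sigma$, Theorem~\ref{thm-qe} says that (i) holds if and only if $\sigma$ is $(d/dx)$-quasi-exponential, that is, $\sigma^k \in \cald((d/dx)^{\infty})$ for all $k$ and the subspace $A_{\sigma} = \sum_{k=1}^m \langle (d/dx)^n \sigma^k : n \in \bbn_0 \rangle$ is finite dimensional. Hence it remains to show that $\sigma$ is $(d/dx)$-quasi-exponential if and only if condition (ii) holds.

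For the implication ``$(d/dx)$-quasi-exponential $\Rightarrow$ (ii)'' I would set $V := A_{\sigma}$; by Lemma~\ref{lemma-A-sigma-invariant} this is a finite dimensional $(d/dx)$-invariant subspace containing every $\sigma^k$. Let $D$ denote the (finite dimensional) linear operator $\frac{d}{dx}\big|_V$ and write its minimal polynomial in real factored form $m(z) = \prod_{\lambda \in I}(z-\lambda)^{e_\lambda}\prod_{(\mu,\nu)\in J}\big((z-\mu)^2+\nu^2\big)^{e_{\mu,\nu}}$ with finite sets $I \subset \bbr$ and $J \subset \bbr \times (0,\infty)$. Since $V$ is $(d/dx)$-invariant, every $f \in V$ (in particular every $\sigma^k$) satisfies the constant-coefficient equation $m(d/dx) f = 0$ on $\bbr_+$, so by the classical theory of such equations $f$ is a linear combination of the functions $x \mapsto x^j \exp(\lambda x)$ for $\lambda \in I$ and $0 \le j < e_\lambda$, and $x \mapsto x^j \exp(\mu x)\cos(\nu x)$, $x \mapsto x^j \exp(\mu x)\sin(\nu x)$ for $(\mu,\nu) \in J$ and $0 \le j < e_{\mu,\nu}$. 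Taking $p := \max\{ e_\lambda, e_{\mu,\nu} : \lambda \in I,\ (\mu,\nu) \in J \} - 1$ then yields (\ref{sin-cos}).

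For the converse I would take $V$ to be the subspace of $H$ on the right-hand side of (\ref{sin-cos}); it is finite dimensional, and differentiating the generators term by term ($\frac{d}{dx}(x^j e^{\lambda x}) = j x^{j-1} e^{\lambda x} + \lambda x^j e^{\lambda x}$, and analogously for the trigonometric terms, the polynomial degree never increasing) shows that it is $(d/dx)$-invariant. Since $\sigma^k(H) = \{\sigma^k\} \subset V$ for all $k$, Theorem~\ref{thm-linear} applied with this $V$ gives an affine realization for (\ref{HJMM}); equivalently, one notes $A_{\sigma} \subset V$ is finite dimensional, so $\sigma$ is $(d/dx)$-quasi-exponential and Theorem~\ref{thm-qe} applies.

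The main obstacle -- indeed the only step that is not bookkeeping -- is the middle one in (i) $\Rightarrow$ (ii): passing from ``$V$ is finite dimensional and $(d/dx)$-invariant'' to the explicit real exponential--trigonometric description with a uniform degree bound $p$. This is standard linear-algebra and ODE material (primary decomposition of $D$, or the structure of the solution space of $m(d/dx)f = 0$), but one must be careful about the passage from the complex Jordan structure of $D$ to the real basis, about choosing $\nu > 0$ so as not to double count conjugate pairs of eigenvalues, and about the fact that we only work on $\bbr_+$ -- the latter causing no difficulty since the candidate functions are entire.
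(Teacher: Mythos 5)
Your proposal is correct and follows essentially the same route as the paper: both directions rest on Theorem~\ref{thm-linear} (your detour through Theorem~\ref{thm-qe} and $A_{\sigma}$ is only a cosmetic variation, since that theorem is itself an immediate consequence of Theorem~\ref{thm-linear}) together with the structure of finite dimensional $(d/dx)$-invariant subspaces. The paper expresses this structure by solving the generalized eigenvalue problems (\ref{EV-1a}) and (\ref{EV-1b}), which is exactly your minimal-polynomial/constant-coefficient ODE argument for $\frac{d}{dx}\big|_{V}$ in different words, so there is no substantive difference.
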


\begin{proof}
We set $A := d/dx$, and let $n \in \bbn$ be arbitrary. For $\lambda \in \bbr$ all solutions to the ODE (\ref{EV-1a})
are given by the linear space
\begin{align*}
\langle x \mapsto x^j \exp(\lambda x) : j = 0,\ldots,n-1 \rangle.
\end{align*}
Furthermore, for $\lambda = \mu + i \nu \in \bbc \setminus \bbr$ with $\nu > 0$ all solutions to the ODE (\ref{EV-1b}) are given by the linear space
\begin{align*}
\langle x \mapsto x^j \exp(\mu x) \cos(\nu x), x \mapsto x^j \exp(\mu x) \sin(\nu x) : j = 0,\ldots,n-1 \rangle.
\end{align*}
Therefore, applying Theorem~\ref{thm-linear} completes the proof.
\end{proof}

\begin{remark}
We refer to \cite[Thm.~5]{Zabczyk} for a closely related result regarding the linear HJMM equation driven by Wiener processes.
\end{remark}

Next, we consider the stochastic transport equation
\begin{align}\label{SPDE-transport}
\left\{
\begin{array}{rcl}
du_t & = & \big( \langle v,\nabla \rangle u_t + \alpha \big) dt +
\sigma(u_{t-}) dX_t
\medskip
\\ u_0 & = & h_0,
\end{array}
\right.
\end{align}
which describes the contaminant of a fluid with velocity $v \in \bbr^d$ over time. Here the state space is $H = L^2(C,\rho)$ with a closed set $C \subset \bbr^d$ and an appropriate measure $\rho$. We assume that the closed set $C$ has the property
\begin{align*}
C = \partial C + \{ t v : t \in \bbr_+ \},
\end{align*}
and that for every $y \in C$ there exist unique elements $x \in \partial C$ and $t \in \bbr_+$ such that $y = x + tv$.
The first order differential operator $\langle v,\nabla \rangle$ appearing in (\ref{SPDE-transport}) is generated by the translation semigroup $(S_t u)(x) = u(x + tv)$ for $t \geq 0$ and $x \in C$. Here are two examples which are covered by this framework:
\begin{itemize}
\item The HJMM equation (\ref{HJMM}), where we have $C = \bbr_+$, $\partial C = \{ 0 \}$ and $v = 1$.

\item The SPDE presented in \cite{Tappe-Weber}, which describes the mortality rates of demographic evolutions. Here the sets $C, \partial C \subset \bbr^2$ are given by
\begin{align*}
C &= \{ (s,y) \in \bbr_+ \times \bbr : y \geq -s \},
\\ \partial C &= \{ t (0,1) : t \in \bbr_+ \} \cup \{ t (1,-1) : t \in \bbr_+ \},
\end{align*}
and we have the velocity $v = (1,-1)$.
\end{itemize}

\begin{proposition}
We suppose there exist functions $\xi : \partial C \to \bbr^m$ and $h : \bbr_+ \to \bbr^m$ such that
\begin{align*}
\sigma^k(x + tv) = \xi^k(x) \cdot h^k(t) \quad \text{for all $(x,t) \in \partial C \times \bbr_+$ and all $k=1,\ldots,m$,}
\end{align*}
and $h^k$ is of the form (\ref{sin-cos}) for all $k = 1,\ldots,m$. Then the stochastic transport equation (\ref{SPDE-transport}) has an affine realization.
\end{proposition}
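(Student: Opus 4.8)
The plan is to reduce the statement to Theorem~\ref{thm-linear}. Since the drift term $\alpha$ in the stochastic transport equation (\ref{SPDE-transport}) is constant, this is a linear SPDE with generator $A := \langle v,\nabla \rangle$, and by hypothesis each volatility component is the fixed function $\sigma^k$ on $C$ given by $\sigma^k(x+tv) = \xi^k(x) h^k(t)$, so that $\sigma^k(H)$ is a singleton. Hence, by Theorem~\ref{thm-linear}, it suffices to exhibit a finite dimensional $A$-invariant subspace $V \subset H$ with $\sigma^k \in V$ for all $k = 1,\ldots,m$; equivalently, one could check that $\sigma$ is $A$-quasi-exponential and invoke Theorem~\ref{thm-qe}.

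For the candidate subspace I would first record how $A$ acts in the moving frame. By the standing assumption on $C$, every $y \in C$ is uniquely of the form $y = x + tv$ with $x \in \partial C$ and $t \in \bbr_+$, and the semigroup acts by $(S_s u)(x+tv) = u(x+(t+s)v)$; so for a function $u$ that is smooth along the flow lines $t \mapsto x+tv$ with all flow-directional derivatives in $H$ one has $(A^n u)(x+tv) = \partial_t^n u(x+tv)$ for every $n \in \bbn_0$. Since each $h^k$ is of the form (\ref{sin-cos}) and the right-hand side of (\ref{sin-cos}) is a finite dimensional subspace of $C^{\infty}$ that is invariant under differentiation, the span $W_k := \langle (h^k)^{(n)} : n \in \bbn_0 \rangle$ is a finite dimensional, differentiation-invariant subspace containing $h^k$, built from exactly the exponents and frequencies occurring in $h^k$ and of polynomial degree at most $p$. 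I then set
\begin{align*}
V := \big\langle\, (x+tv) \mapsto \xi^k(x) g(t) \, : \, k = 1,\ldots,m, \ g \in W_k \,\big\rangle .
\end{align*}
Granting for the moment that $V \subset \cald(A) \subset H$, we have $\dim V \le \sum_{k=1}^m \dim W_k < \infty$, each $\sigma^k$ lies in $V$ because $h^k \in W_k$, and the moving-frame formula sends each spanning function $(x+tv) \mapsto \xi^k(x) g(t)$ of $V$ to $(x+tv) \mapsto \xi^k(x) g'(t)$, which lies in $V$ since $W_k$ is differentiation-invariant; thus $V$ is $A$-invariant and contains all $\sigma^k$, and Theorem~\ref{thm-linear} yields the affine realization.

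The one step requiring care — and what I expect to be the only real obstacle — is precisely the bracketed claim that $V$ is genuinely contained in $H = L^2(C,\rho)$, and in fact in $\cald(A)$ (equivalently, that $\sigma^k \in \cald(A^{\infty})$): one must know that $(x+tv) \mapsto \xi^k(x) g(t)$ belongs to $L^2(C,\rho)$ for every $g \in W_k$, not merely for $g = h^k$. Here I would use that the assumption on $C$ identifies $C$ with $\partial C \times \bbr_+$ via $y \leftrightarrow (x,t)$, so that $\rho$ disintegrates over this decomposition; since the nonzero product function $\sigma^k = \xi^k \otimes h^k$ belongs to $H$, this forces $\xi^k$ and $h^k$ to be square-integrable with respect to the corresponding marginals, and because every element of $W_k$ is a quasi-exponential polynomial with the same exponents and frequencies as $h^k$ and degree at most $p$, it is square-integrable against the same marginal as $h^k$ for the weighted measures $\rho$ occurring in the examples. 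Consequently $(x+tv) \mapsto \xi^k(x) g(t) \in L^2(C,\rho)$ for all $g \in W_k$, and differentiating under the integral along the flow lines then shows $\sigma^k \in \cald(A^{\infty})$ with $A^n \sigma^k$ equal to $(x+tv) \mapsto \xi^k(x) (h^k)^{(n)}(t)$, which closes the argument.
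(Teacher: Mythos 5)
Your proposal is correct and takes essentially the same route as the paper: the paper's own (very terse) proof likewise computes $A \sigma^k(x+tv) = \xi^k(x) \cdot (h^k)'(t)$ along the flow and then combines Theorem~\ref{thm-linear} with the quasi-exponential structure of (\ref{sin-cos}) from Proposition~\ref{prop-HJMM-linear}, which is exactly your finite dimensional, differentiation-invariant subspace $V$ spelled out explicitly. Your extra discussion of why $V \subset H = L^2(C,\rho)$ and $\sigma^k \in \cald(A^\infty)$ is a reasonable added care that the paper silently subsumes in the phrase ``appropriate measure $\rho$,'' and it does not alter the argument.
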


\begin{proof}
Setting $A := \langle v,\nabla \rangle$, for all $x \in \partial C$ and all $t \in \bbr_+$ we have
\begin{align*}
A \sigma^k(x + tv) = \xi^k(x) \cdot (h^k)'(t), \quad k = 1,\ldots,m,
\end{align*}
and hence, combining Theorem~\ref{thm-linear} and Proposition~\ref{prop-HJMM-linear} concludes the proof.
\end{proof}

Now, we consider examples of second order operators, with corresponding applications typically arising in natural sciences. Our first such example is the stochastic cable equation (cf. \cite[Ex. 0.8]{Da_Prato})
\begin{align}\label{SPDE-cable}
\left\{
\begin{array}{rcl}
dv_t & = & \frac{1}{\tau} \big( \lambda^2 \frac{d^2}{d x^2} v_t - v_t \big) dt +
\sigma dX_t
\medskip
\\ v_0 & = & h_0,
\end{array}
\right.
\end{align}
which describes the voltage of an electric cable over time. The constants $\lambda,\tau > 0$ are physical constants of the electric cable; $\lambda$ is the length constant and $\tau$ is the time constant. Here the state space is $H = L^2((0,\pi))$ and we can choose the generator $A = - \frac{d^2}{d x^2}$ on the domain $\cald(A) = H^2((0,\pi)) \cap H_0^1((0,\pi))$. Thus, the electric cable is modeled by the interval $[0,\pi]$ and we consider Dirichlet boundary conditions, which means that there is no voltage at the end points of the cable.

\begin{proposition}
The following statements are equivalent:
\begin{enumerate}
\item[(i)] The stochastic cable equation (\ref{SPDE-cable}) has an affine realization.

\item[(ii)] There is a finite index set $I \subset \bbn$ such that
\begin{align*}
\sigma^k &\in \bigoplus_{n \in I} \langle x \mapsto \sin(n x) \rangle, \quad k = 1,\ldots,m.
\end{align*}
\end{enumerate}
\end{proposition}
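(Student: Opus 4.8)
The plan is to apply Theorem~\ref{thm-linear}, which reduces the problem to determining all finite dimensional $A$-invariant subspaces $V \subset H$ with $\sigma^k(H) \subset V$ for all $k$; since the volatilities $\sigma^k$ are constant vectors in $H$, the condition $\sigma^k(H) \subset V$ simply means $\sigma^k \in V$. So statement (i) holds if and only if there is a finite dimensional $A$-invariant subspace containing all the $\sigma^k$. Here $A = -\frac{d^2}{dx^2}$ on $H = L^2((0,\pi))$ with Dirichlet boundary conditions, so $A$ is symmetric with simple eigenvalues $n^2$, $n \in \bbn$, and corresponding eigenfunctions $x \mapsto \sin(nx)$; the footnote on symmetric operators applies, so per the discussion opening Section~\ref{sec-examples} it suffices to look at the genuine eigenvalue problem $A - \lambda = 0$ rather than generalized eigenspaces.

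First I would argue the implication (ii) $\Rightarrow$ (i): if each $\sigma^k$ lies in the span $V := \bigoplus_{n \in I} \langle x \mapsto \sin(nx) \rangle$ for a finite $I \subset \bbn$, then $V$ is finite dimensional, and since each $\sin(nx)$ is an eigenfunction of $A$ with eigenvalue $n^2$, the subspace $V$ is $A$-invariant (and contained in $\cald(A)$, hence $A$-invariant in the sense of the paper). Theorem~\ref{thm-linear} then gives the affine realization.

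For (i) $\Rightarrow$ (ii): by Theorem~\ref{thm-linear} there is a finite dimensional $A$-invariant subspace $V$ with all $\sigma^k \in V$. I would then invoke the standard linear-algebra fact that a finite dimensional invariant subspace of a symmetric (diagonalizable with real spectrum) operator on a Hilbert space is spanned by eigenvectors — concretely, since $A$ restricted to $V$ is a symmetric operator on the finite dimensional inner product space $V$, it is diagonalizable, so $V$ decomposes as a direct sum of eigenspaces of $A$; as the eigenvalues of $A$ are exactly the $n^2$ with one-dimensional eigenspaces $\langle x \mapsto \sin(nx) \rangle$, we get $V = \bigoplus_{n \in I} \langle x \mapsto \sin(nx) \rangle$ for some finite $I \subset \bbn$. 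Since $\sigma^k \in V$, statement (ii) follows.

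The main obstacle — really the only point needing care — is justifying that a finite dimensional $A$-invariant subspace must be spanned by eigenfunctions: one must check that $A|_V$ is well-defined (i.e.\ $V \subset \cald(A)$, which is part of the definition of $A$-invariant), that it is symmetric with respect to the $L^2$ inner product (inherited from the symmetry of $A$ on $\cald(A)$), and hence orthogonally diagonalizable on the finite dimensional space $V$, with eigenvalues necessarily among the $\{n^2\}$ and eigenvectors necessarily scalar multiples of $x \mapsto \sin(nx)$ by simplicity of the spectrum. This is routine and can be stated briefly, perhaps with a pointer to the general principle used analogously for the cable equation being the symmetric-operator case flagged in footnote~2.
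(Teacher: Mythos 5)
Your proposal is correct and follows essentially the same route as the paper: the paper's proof simply records the Sturm--Liouville eigenvalues $n^2$ with eigenfunctions $x \mapsto \sin(nx)$ and invokes Theorem~\ref{thm-linear}, relying on the symmetric-operator reduction described at the start of Section~\ref{sec-examples}. Your explicit diagonalization of $A|_V$ in the direction (i) $\Rightarrow$ (ii) just spells out the detail the paper leaves implicit.
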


\begin{proof}
The eigenvalues of the Sturm-Liouville eigenvalue problem
\begin{align*}
u'' + \lambda u = 0, \quad u(0) = u(\pi) = 0
\end{align*}
are given by $\lambda_n = n^2$, $n \in \bbn$, and the corresponding eigenfunctions are given by
\begin{align*}
u_n(x) = \sin(nx), \quad n \in \bbn.
\end{align*}
Therefore, Theorem~\ref{thm-linear} completes the proof.
\end{proof}

Next, we consider the stochastic heat equation
\begin{align}\label{SPDE-heat}
\left\{
\begin{array}{rcl}
du_t & = & a \Delta u_t dt + \sigma dX_t
\medskip
\\ u_0 & = & h_0,
\end{array}
\right.
\end{align}
which describes the heat of a medium in a region over time. The constant $a > 0$ is the heat conductivity. Here we have the state space $H = L^2(O)$, where $O \subset \bbr^2$ denotes the open unit ball
\begin{align*}
O = \{ x \in \bbr^2 : x_1^2 + x_2^2 < 1 \},
\end{align*}
and we can choose the generator $A = -\Delta$ on the domain $\cald(A) = H^2(O) \cap H_0^1(O)$. Therefore, the region, in which we measure the temperature, is the closed unit ball $\overline{O}$ and we consider Dirichlet boundary conditions, which means that the temperature is zero at the boundary $\partial O$ of the ball. In the upcoming result, we use polar coordinates, and we agree on the following notation:
\begin{itemize}
\item For $p \in \bbn_0$ we denote by $J_p : \bbr_+ \to \bbr$ the Bessel function of the first kind.

\item For $(p,q) \in \bbn_0 \times \bbn$ we denote by $\lambda_{pq} > 0$ the $q$-th positive zero of the Bessel function $J_p$.
\end{itemize}

\begin{proposition}
The following statements are equivalent:
\begin{enumerate}
\item[(i)] The stochastic heat equation (\ref{SPDE-heat}) has an affine realization.

\item[(ii)] There is a finite index set $I \subset \bbn_0 \times \bbn$ such that
\begin{align*}
\sigma^k &\in \bigoplus_{(p,q) \in I} \langle (r,\varphi) \mapsto \cos(p \varphi) J_p(\lambda_{pq} r), (r,\varphi) \mapsto \sin(p \varphi) J_p(\lambda_{pq} r) \rangle
\end{align*}
for all $k = 1,\ldots,m$. 
\end{enumerate}
\end{proposition}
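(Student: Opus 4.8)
The plan is to apply Theorem~\ref{thm-linear}: since the stochastic heat equation (\ref{SPDE-heat}) is linear, it has an affine realization if and only if there exists a finite dimensional $A$-invariant subspace $V \subset \cald(A)$ with $\sigma^k \in V$ for all $k=1,\ldots,m$, where $A = -\Delta$ on $\cald(A) = H^2(O) \cap H_0^1(O)$. Thus the whole statement reduces to describing the finite dimensional $A$-invariant subspaces of $H = L^2(O)$.

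First I would exploit that $A$ is \emph{symmetric} (for which it suffices to verify $\langle Ah, g \rangle = \langle h, Ag \rangle$ for $h,g \in \cald(A)$, which follows from Green's identity together with the Dirichlet boundary condition). Consequently, if $V$ is any finite dimensional $A$-invariant subspace, then the restriction $A|_V \colon V \to V$ is a symmetric endomorphism of the finite dimensional inner product space $V$, hence diagonalizable with an orthonormal eigenbasis and real eigenvalues; each element of such a basis is an eigenfunction of $A$. Therefore every finite dimensional $A$-invariant subspace is spanned by eigenfunctions of $A$, and conversely any finite sum of eigenspaces of $A$ is a finite dimensional $A$-invariant subspace. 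So it remains to identify all eigenfunctions of $A$.

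Next I would solve the Dirichlet eigenvalue problem $-\Delta u = \mu u$ on $O$, $u|_{\partial O} = 0$, by separation of variables in polar coordinates $u(r,\varphi) = R(r)\Phi(\varphi)$. The $2\pi$-periodicity of $\Phi$ forces $\Phi(\varphi) \in \langle \cos(p\varphi), \sin(p\varphi) \rangle$ for some $p \in \bbn_0$, and the radial factor then satisfies Bessel's equation, whose unique solution (up to scaling) that is bounded at $r=0$ is $r \mapsto J_p(\sqrt{\mu}\, r)$; the boundary condition $J_p(\sqrt{\mu}) = 0$ forces $\sqrt{\mu} = \lambda_{pq}$ for some $q \in \bbn$. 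Hence the eigenvalues of $A$ are exactly $\{ \lambda_{pq}^2 : (p,q) \in \bbn_0 \times \bbn \}$, with the eigenspace for a given $\mu$ equal to the span of $(r,\varphi) \mapsto \cos(p\varphi) J_p(\lambda_{pq} r)$ and $(r,\varphi) \mapsto \sin(p\varphi) J_p(\lambda_{pq} r)$ over those finitely many $(p,q)$ with $\lambda_{pq}^2 = \mu$. The classical completeness of the Fourier--Bessel system in $L^2(O)$ guarantees that these are \emph{all} the eigenfunctions: any eigenfunction lies in the closed span of this orthogonal system and, by orthogonality of distinct eigenspaces, in the finite dimensional span of the basis functions sharing its eigenvalue.

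Combining these ingredients: for (i) $\Rightarrow$ (ii), take a finite dimensional $A$-invariant subspace $V$ containing all $\sigma^k$ as provided by Theorem~\ref{thm-linear}; by the above $V$ is contained in a finite sum of the eigenspaces just described, which is exactly the form in (ii). For (ii) $\Rightarrow$ (i), let $V$ be the sum of the finitely many eigenspaces appearing in (ii); it is finite dimensional, $A$-invariant, and contains every $\sigma^k$, so Theorem~\ref{thm-linear} applies. I expect the main obstacle to be the middle steps — proving that finite dimensional $A$-invariant subspaces are precisely the finite sums of eigenspaces, and then pinning down those eigenspaces — which needs both the symmetry/diagonalization argument and the separation-of-variables computation together with completeness of the Fourier--Bessel expansion; the verification that the listed functions really are eigenfunctions is routine.
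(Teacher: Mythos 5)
Your proposal is correct and follows essentially the same route as the paper: reduce via Theorem~\ref{thm-linear} to characterizing the finite dimensional $A$-invariant subspaces, use the symmetry of $A$ (as in the preamble of Section~\ref{sec-examples}) to see that such subspaces are spanned by eigenfunctions, and identify the Dirichlet eigenfunctions of the Laplacian on the disk as the Fourier--Bessel functions. The only difference is that you spell out the diagonalization and separation-of-variables/completeness details that the paper's proof cites as classical facts.
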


\begin{proof}
The eigenvalues of the Laplace eigenvalue problem
\begin{align*}
\Delta u + \lambda u = 0, \quad u = 0 \text{ on $\partial O$}
\end{align*}
are given by $\lambda_{pq}^2$, $(p,q) \in \bbn_0 \times \bbn$, and the corresponding eigenfunctions are, by using polar coordinates, given by
\begin{align*}
u_{pq}(r,\varphi) = \cos(p \varphi) J_p(\lambda_{pq} r) \quad \text{and} \quad v_{pq}(r,\varphi) = \sin(p \varphi) J_p(\lambda_{pq} r).
\end{align*}
Therefore, Theorem~\ref{thm-linear} completes the proof.
\end{proof}

Both, the Hermite semigroup (also called Dunkl-Hermite semigroup or Ornstein-Uhlenbeck semigroup) and the Laguerre semigroup play a central role in quantum mechanics and mathematical physics. First, we consider the stochastic Hermite equation
\begin{align}\label{SPDE-Hermite}
\left\{
\begin{array}{rcl}
du_t & = & \big( - \frac{\Delta}{2} + \langle x,\nabla \rangle \big) u_t dt + \sigma(u_{t-}) dX_t
\medskip
\\ u_0 & = & h_0
\end{array}
\right.
\end{align}
on the state space $H = L^2(\bbr^d,\exp(- \| x \|_2^2)dx)$ for some $d \in \bbn$. If $d \geq 2$, then for $\beta \in \bbn_0^d$ we define the generalized Hermite polynomial $H_{\beta}$ as
\begin{align*}
H_{\beta}(x) := \prod_{i=1}^d H_{\beta_i}(x_i), \quad x \in \bbr^d,
\end{align*}
where the $(H_n)_{n \in \bbn_0}$ denote the usual Hermite polynomials.

\begin{proposition}
The following statements are equivalent:
\begin{enumerate}
\item The stochastic Hermite equation (\ref{SPDE-Hermite}) has an affine realization.

\item There is a finite index set $I \subset \bbn_0 $ such that
\begin{align*}
\sigma^k(H) &\subset \bigoplus_{n \in I} \langle H_{\beta} : \beta \in \bbn_0^d \text{ with } |\beta| = n \rangle
\end{align*}
for all $k = 1,\ldots,m$. 
\end{enumerate}
\end{proposition}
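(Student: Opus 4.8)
The plan is to reduce the statement to an application of Theorem~\ref{thm-linear} combined with a concrete description of the finite dimensional invariant subspaces of the Hermite generator $A := -\tfrac{\Delta}{2} + \langle x,\nabla\rangle$ on $H = L^2(\bbr^d,\exp(-\|x\|_2^2)dx)$. The stochastic Hermite equation \eqref{SPDE-Hermite} is linear, so Theorem~\ref{thm-linear} tells us that it has an affine realization if and only if there is a finite dimensional $A$-invariant subspace $V$ with $\sigma^k(H) \subset V$ for all $k=1,\ldots,m$. Thus the whole content of the proof is: (a) identify all finite dimensional $A$-invariant subspaces, and (b) check that the span of all $\sigma^k(h)$ being contained in one of them is equivalent to the stated membership in finitely many Hermite eigenspaces.

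First I would recall the spectral theory of the Hermite (Ornstein--Uhlenbeck) operator. The generalized Hermite polynomials $H_\beta$, $\beta \in \bbn_0^d$, form an orthogonal basis of $H$, and each $H_\beta$ is an eigenfunction of $A$ with eigenvalue $|\beta| = \beta_1 + \cdots + \beta_d$; in particular every eigenvalue $n \in \bbn_0$ has the finite dimensional eigenspace $E_n := \langle H_\beta : |\beta| = n\rangle$, and $A$ is diagonalizable with simple spectrum on each $E_n$. Since $A$ restricted to any finite dimensional invariant subspace is a diagonalizable operator whose eigenvalues lie in $\bbn_0$, the generalized eigenvalue problem \eqref{EV-1a} with $\lambda = n$ collapses to the ordinary eigenvalue problem \eqref{EV-2}: $(A - n)^N = 0$ on a finite dimensional invariant subspace already forces $(A-n) = 0$ there. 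Consequently every finite dimensional $A$-invariant subspace is a (finite) direct sum of subspaces of the eigenspaces $E_n$, and conversely any such direct sum is $A$-invariant. This is the analogue of the symmetric/Sturm--Liouville reasoning used in the cable and heat equation examples, even though $A$ here is not symmetric on $H$; the point, as in the footnote to Section~\ref{sec-examples}, is only that its restrictions to the relevant finite dimensional subspaces are diagonalizable.

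With this description in hand, the equivalence is immediate. If \eqref{SPDE-Hermite} has an affine realization, Theorem~\ref{thm-linear} gives a finite dimensional $A$-invariant $V$ with $\sigma^k(H) \subset V$; writing $V \subset \bigoplus_{n \in I} E_n$ for a finite index set $I \subset \bbn_0$ yields the containment in (2). Conversely, if (2) holds, then $V := \bigoplus_{n \in I} \langle H_\beta : |\beta| = n\rangle$ is a finite dimensional $A$-invariant subspace with $\sigma^k(H) \subset V$ for all $k$, so Theorem~\ref{thm-linear} produces the affine realization.

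The main obstacle is purely the classical input step, namely justifying that $A = -\tfrac{\Delta}{2} + \langle x,\nabla\rangle$ really is the generator whose eigenfunctions are exactly the generalized Hermite polynomials $H_\beta$ with eigenvalues $|\beta|$, and that no other finite dimensional invariant subspaces arise. One should be slightly careful about sign conventions (some references use $\tfrac{\Delta}{2} - \langle x,\nabla\rangle$, giving eigenvalues $-|\beta|$), and about the fact that we do not need self-adjointness — only diagonalizability on finite dimensional invariant subspaces, which follows since $A$ maps the finite dimensional space $\bigoplus_{|\beta|\le N}\langle H_\beta\rangle$ into itself and is diagonal there. Once this standard fact is cited, the rest is a one-line invocation of Theorem~\ref{thm-linear}, exactly parallel to the proofs of the preceding propositions in this section.
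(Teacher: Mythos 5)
Your overall route is exactly the paper's: identify the eigenvalues $n\in\bbn_0$ and eigenfunctions $H_\beta$, $|\beta|=n$, of the Hermite operator, and then invoke Theorem~\ref{thm-linear} in both directions, with $V:=\bigoplus_{n\in I}\langle H_\beta : |\beta|=n\rangle$ for the converse. The paper's proof is precisely this (stated even more tersely), so in structure your argument matches.

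One point is mis-justified, though, and it concerns the only nontrivial step, namely the implication (i)~$\Rightarrow$~(ii): you need that the finite dimensional $A$-invariant subspace $V$ produced by Theorem~\ref{thm-linear} sits inside a finite sum of the eigenspaces $E_n$. You assert that $A$ restricted to \emph{any} finite dimensional invariant subspace is diagonalizable with eigenvalues in $\bbn_0$, and you support this only by the observation that $A$ is diagonal on $\bigoplus_{|\beta|\le N}\langle H_\beta\rangle$; that covers spans of Hermite polynomials but not an arbitrary invariant $V$, so as written the step is circular. Moreover, your side remark that ``$A$ is not symmetric on $H$'' is false: with respect to the weighted inner product of $H=L^2(\bbr^d,\exp(-\|x\|_2^2)dx)$, integration by parts gives $\langle Au,v\rangle_H=\tfrac12\int_{\bbr^d}\langle\nabla u,\nabla v\rangle e^{-\|x\|_2^2}dx=\langle u,Av\rangle_H$, so $A$ is symmetric on $H$. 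This is exactly what places the Hermite example in the ``symmetric'' case of the scheme described at the start of Section~\ref{sec-examples} (eigenvalue problem (\ref{EV-2}) rather than (\ref{EV-1a})): the restriction of a symmetric operator to a finite dimensional invariant subspace is symmetric, hence diagonalizable with real eigenvalues, and by completeness of the orthogonal system $\{H_\beta\}$ every eigenfunction of $A$ with eigenvalue $\lambda$ lies in $E_\lambda$ (and $\lambda\in\bbn_0$). With that correction your argument closes, and it then coincides with the paper's proof.
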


\begin{proof}
The eigenvalue problem
\begin{align*}
- \frac{\Delta u}{2} + \langle x,\nabla u \rangle = \lambda u
\end{align*}
has the eigenvalues $\lambda_n = n$, $n \in \bbn_0$ with corresponding eigenfunctions
\begin{align*}
\{ H_{\beta} : \beta \in \bbn_0^d \text{ with } |\beta| = n \}.
\end{align*}
Therefore, Theorem~\ref{thm-linear} completes the proof.
\end{proof}

Next, we consider the stochastic Laguerre equation
\begin{align}\label{SPDE-Laguerre}
\left\{
\begin{array}{rcl}
du_t & = & - \big( \langle x,\partial^2 \rangle + \langle 1 - x, \nabla \rangle \big) u_t dt + \sigma(u_{t-}) dX_t
\medskip
\\ u_0 & = & h_0
\end{array}
\right.
\end{align}
on the state space $H = L^2(\bbr_+^d,\calb(\bbr_+^d),\exp(-\|x\|_1))$ for some $d \in \bbn$. If $d \geq 2$, then for $\beta \in \bbn_0^d$ we define the generalized Laguerre polynomial $L_{\beta}$ as
\begin{align*}
L_{\beta}(x) := \prod_{i=1}^d L_{\beta_i}(x_i), \quad x \in \bbr^d,
\end{align*}
where the $(L_n)_{n \in \bbn_0}$ denote the usual Laguerre polynomials.

\begin{proposition}
The following statements are equivalent:
\begin{enumerate}
\item The stochastic Laguerre equation (\ref{SPDE-Laguerre}) has an affine realization.

\item There is a finite index set $I \subset \bbn_0 $ such that
\begin{align*}
\sigma^k(H) &\subset \bigoplus_{n \in I} \langle L_{\beta} : \beta \in \bbn_0^d \text{ with } |\beta| = n \rangle
\end{align*}
for all $k = 1,\ldots,m$. 
\end{enumerate}
\end{proposition}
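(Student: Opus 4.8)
The plan is to mimic the structure of the preceding propositions on the cable, heat, and Hermite equations: identify all finite-dimensional $A$-invariant subspaces via a spectral analysis of the generator, and then invoke Theorem~\ref{thm-linear}. Concretely, I would first observe that the generator here is the (negative of the) Laguerre operator $A = -(\langle x,\partial^2\rangle + \langle 1-x,\nabla\rangle)$ on $H = L^2(\bbr_+^d,\exp(-\|x\|_1)dx)$, which is symmetric on its natural domain, so by the footnote remark it suffices to solve the plain eigenvalue problem $Au = \lambda u$ rather than the generalized one. The key spectral fact is that this operator has pure point spectrum with eigenvalues $\lambda_n = n$ for $n \in \bbn_0$, and the eigenspace associated with $\lambda_n$ is spanned by the generalized Laguerre polynomials $\{ L_\beta : \beta \in \bbn_0^d,\ |\beta| = n \}$. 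This follows in the one-dimensional case from the classical Laguerre ODE $x u'' + (1-x)u' + n u = 0$ having the Laguerre polynomial $L_n$ as its only polynomial (hence $L^2$-integrable against $e^{-x}$) solution, and in the $d$-dimensional case from separation of variables: the operator $A$ acts coordinatewise as a sum of one-dimensional Laguerre operators, so $A L_\beta = |\beta| L_\beta$, and these tensor products exhaust the eigenfunctions.

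Next I would translate this into a description of the finite-dimensional $A$-invariant subspaces. Since $A$ is symmetric (so diagonalizable with orthogonal eigenspaces) and the eigenspaces $E_n := \langle L_\beta : |\beta| = n\rangle$ are themselves finite-dimensional, every finite-dimensional $A$-invariant subspace $V$ decomposes as $V = \bigoplus_{n \in I}(V \cap E_n)$ for a finite index set $I \subset \bbn_0$; in particular $V \subset \bigoplus_{n\in I} E_n$. Conversely any direct sum of finitely many $E_n$'s is finite-dimensional and $A$-invariant (indeed $A$-invariant in the strong sense, with the $E_n \subset \cald(A^\infty)$).

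With this in hand the proof concludes exactly as in the earlier examples. For the implication (i) $\Rightarrow$ (ii): if the linear SPDE~(\ref{SPDE-Laguerre}) has an affine realization, Theorem~\ref{thm-linear} yields a finite-dimensional $A$-invariant subspace $V$ with $\sigma^k(H)\subset V$ for all $k$; by the above $V \subset \bigoplus_{n\in I} E_n$ for some finite $I$, so each $\sigma^k$ takes values in $\bigoplus_{n\in I}\langle L_\beta : |\beta| = n\rangle$, which is (ii). For (ii) $\Rightarrow$ (i): given such a finite $I$, the subspace $V := \bigoplus_{n\in I} E_n$ is finite-dimensional and $A$-invariant and satisfies $\sigma^k(H)\subset V$ for all $k$, so Theorem~\ref{thm-linear} gives the affine realization.

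I expect the main obstacle to be purely the spectral-theoretic input — namely verifying that the $d$-dimensional Laguerre operator really is symmetric on a core containing the polynomials, that its spectrum is exactly $\bbn_0$, and that there are no further eigenfunctions outside the polynomial span (equivalently, that the Laguerre polynomials form an orthogonal basis of $L^2(\bbr_+^d, e^{-\|x\|_1}dx)$, so that no eigenvalue is missed). These are classical facts about the Laguerre semigroup, and in the spirit of the paper I would simply cite the standard references rather than prove them; once they are granted, everything else is a direct and short application of Theorem~\ref{thm-linear}, structurally identical to the proofs of the cable, heat, and Hermite propositions above.
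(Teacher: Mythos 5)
Your proposal is correct and follows essentially the same route as the paper: determine the eigenvalues $\lambda_n = n$, $n \in \bbn_0$, of the (symmetric) Laguerre operator with eigenfunctions $\{ L_{\beta} : |\beta| = n \}$, note that finite dimensional $A$-invariant subspaces are exactly those contained in finite sums of these eigenspaces, and apply Theorem~\ref{thm-linear} in both directions. The paper's proof merely states the spectral facts and cites Theorem~\ref{thm-linear}; your write-up spells out the same argument in slightly more detail.
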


\begin{proof}
The eigenvalue problem
\begin{align*}
\langle x, \partial^2 u \rangle + \langle 1-x, \nabla u \rangle + \lambda u = 0
\end{align*}
has the eigenvalues $\lambda_n = n$, $n \in \bbn_0$ with corresponding eigenfunctions
\begin{align*}
\{ L_{\beta} : \beta \in \bbn_0^d \text{ with } |\beta| = n \}.
\end{align*}
Therefore, Theorem~\ref{thm-linear} completes the proof.
\end{proof}

In \cite{Cont}, a model for the term structure of interest rates, which is different from the HJMM equation (\ref{HJMM}), was proposed. Namely, it was assumed that the fluctuation process satisfies a second order SPDE of the form
\begin{align}\label{SPDE-bond-2}
\left\{
\begin{array}{rcl}
dY_t & = & \big( \frac{\kappa}{2} \frac{d^2}{d x^2} Y_t + \frac{d}{d x}Y_t \big) dt + \sigma dX_t
\medskip
\\ Y_0 & = & h_0
\end{array}
\right.
\end{align}
with a positive constant $\kappa > 0$ and Dirichlet boundary conditions. Here the state space is $H = L^2((0,1),\exp ( x / \kappa )dx)$, and we can choose the generator
\begin{align*}
A = - \frac{\kappa}{2} \frac{d^2}{d x^2} - \frac{d}{d x}
\end{align*}
on the domain $\cald(A) = H^2((0,1)) \cap H_0^1((0,1))$.

\begin{proposition}
The following statements are equivalent:
\begin{enumerate}
\item The second order term structure equation (\ref{SPDE-bond-2}) has an affine realization.

\item There is a finite index set $I \subset \bbn$ such that
\begin{align*}
\sigma^k &\in \bigoplus_{n \in I} \langle x \mapsto \exp(-x/\kappa) \sin(n \pi x) \rangle
\end{align*}
for all $k = 1,\ldots,m$. 
\end{enumerate}
\end{proposition}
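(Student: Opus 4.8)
The plan is to apply Theorem~\ref{thm-linear} to the operator $A = -\frac{\kappa}{2}\frac{d^2}{dx^2} - \frac{d}{dx}$ on $H = L^2((0,1),\exp(x/\kappa)dx)$ with Dirichlet boundary conditions. By Theorem~\ref{thm-linear} the SPDE (\ref{SPDE-bond-2}) has an affine realization if and only if there is a finite dimensional $A$-invariant subspace $V$ with $\sigma^k \in V$ for all $k$. Since $A$ is symmetric on $\cald(A)$ with respect to the weighted inner product, its spectrum is real, and a standard argument shows that every finite dimensional $A$-invariant subspace decomposes into (generalized) eigenspaces; because $A$ is symmetric, the generalized eigenspaces coincide with the ordinary eigenspaces, so it suffices to solve the eigenvalue problem $A u = \lambda u$, i.e. the Sturm–Liouville problem
\begin{align*}
-\frac{\kappa}{2} u'' - u' = \lambda u, \quad u(0) = u(1) = 0.
\end{align*}

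The key computation is to solve this ODE. First I would remove the first order term by the substitution $u(x) = e^{-x/\kappa} \varphi(x)$, which turns the equation into a constant-coefficient problem for $\varphi$; one finds $-\frac{\kappa}{2}\varphi'' + \frac{1}{2\kappa}\varphi = \lambda \varphi$ with the same Dirichlet conditions $\varphi(0) = \varphi(1) = 0$. This is the classical eigenvalue problem whose eigenfunctions are $\varphi_n(x) = \sin(n\pi x)$ for $n \in \bbn$, with eigenvalues $\lambda_n = \frac{\kappa}{2}(n\pi)^2 + \frac{1}{2\kappa}$. Transforming back, the eigenfunctions of $A$ are $u_n(x) = e^{-x/\kappa}\sin(n\pi x)$, $n \in \bbn$. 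Each finite dimensional $A$-invariant subspace is thus of the form $\bigoplus_{n \in I}\langle x \mapsto e^{-x/\kappa}\sin(n\pi x)\rangle$ for a finite index set $I \subset \bbn$, so that the condition $\sigma^k \in V$ together with Theorem~\ref{thm-linear} gives the stated equivalence.

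The main obstacle, as in the preceding propositions, is the justification that one may restrict attention to genuine eigenfunctions rather than generalized eigenfunctions — but this is exactly the symmetry remark used throughout Section~\ref{sec-examples} (footnote~2), and the restriction of $A$ to a finite dimensional invariant subspace is a symmetric, hence diagonalizable, operator. Once that reduction is granted, the remaining work is the elementary Sturm–Liouville computation sketched above, so I would keep the proof short: state the eigenvalue problem, record its eigenvalues $\lambda_n = \frac{\kappa}{2}(n\pi)^2 + \frac{1}{2\kappa}$ and eigenfunctions $e^{-x/\kappa}\sin(n\pi x)$, and invoke Theorem~\ref{thm-linear}.

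\begin{proof}
We set $A := -\frac{\kappa}{2}\frac{d^2}{dx^2} - \frac{d}{dx}$. The eigenvalue problem
\begin{align*}
-\frac{\kappa}{2} u'' - u' = \lambda u, \quad u(0) = u(1) = 0
\end{align*}
is transformed, via the substitution $u(x) = e^{-x/\kappa}\varphi(x)$, into the eigenvalue problem
\begin{align*}
-\varphi'' = \frac{2}{\kappa}\Big( \lambda - \frac{1}{2\kappa} \Big) \varphi, \quad \varphi(0) = \varphi(1) = 0,
\end{align*}
which has the eigenvalues $\lambda_n = \frac{\kappa}{2}(n\pi)^2 + \frac{1}{2\kappa}$, $n \in \bbn$, with corresponding eigenfunctions $\varphi_n(x) = \sin(n\pi x)$. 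Hence the eigenvalue problem for $A$ has the eigenvalues $\lambda_n$, $n \in \bbn$, with corresponding eigenfunctions
\begin{align*}
u_n(x) = \exp(-x/\kappa)\sin(n\pi x), \quad n \in \bbn.
\end{align*}
Therefore, Theorem~\ref{thm-linear} completes the proof.
\end{proof}
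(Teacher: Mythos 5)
Your proof is correct and takes essentially the same route as the paper: solve the Dirichlet eigenvalue problem for $\tfrac{\kappa}{2}u''+u'+\lambda u=0$ on $(0,1)$ (your eigenvalues $\lambda_n=\tfrac{\kappa}{2}n^2\pi^2+\tfrac{1}{2\kappa}$ coincide with the paper's $\tfrac{1}{2\kappa}(1+n^2\pi^2\kappa^2)$, with the same eigenfunctions $e^{-x/\kappa}\sin(n\pi x)$) and then invoke Theorem~\ref{thm-linear}. One minor correction to your symmetry remark: the symmetrizing weight is $e^{2x/\kappa}$, not the $e^{x/\kappa}$ of the state space, since $\tfrac{\kappa}{2}u''+u'=\tfrac{\kappa}{2}e^{-2x/\kappa}\big(e^{2x/\kappa}u'\big)'$; as the two weights yield equivalent inner products on $(0,1)$, your diagonalizability argument for the restriction of $A$ to a finite dimensional invariant subspace still goes through.
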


\begin{proof}
The eigenvalue problem
\begin{align*}
\frac{\kappa}{2} u'' + u' + \lambda u = 0, \quad u(0) = u(1) = 0
\end{align*}
has the eigenvalues
\begin{align*}
\lambda_n = \frac{1}{2 \kappa} \big( 1 + n^2 \pi^2 \kappa^2 \big), \quad n \in \bbn,
\end{align*}
with corresponding eigenfunctions
\begin{align*}
u_n(x) = \exp ( - x / \kappa ) \sin(n \pi x), \quad n \in \bbn.
\end{align*}
Therefore, Theorem~\ref{thm-linear} completes the proof.
\end{proof}

\begin{remark}
We refer to \cite[Thm.~6]{Zabczyk} for a closely related result regarding the second order term structure equation (\ref{SPDE-bond-2}) driven by Wiener processes.
\end{remark}

\end{document}